\newtheorem{theorem}{Theorem}[section]
\theoremstyle{definition}
\newtheorem{definition}[theorem]{Definition}
\newtheorem{example}[theorem]{Example}
\theoremstyle{remark}
\theoremstyle{proposition}
\newtheorem{proposition}[theorem]{Proposition}
\theoremstyle{corollary}
\newtheorem{corollary}[theorem]{Corollary}
\numberwithin{equation}{section}
\newcommand{\cpx}{{\mathbb C} }
\newcommand{\real}{{\mathbb R}}
\begin{document}

\title[Clebsch-Gordan coefficients]%
{A rational theory of Clebsch-Gordan coefficients} 


\author{Robert W. Donley, Jr.}
\address{Department of Mathematics and Computer Science,  Queensborough Community College (CUNY), Bayside, NY 11364, USA}
\curraddr{}
\email{rdonley@qcc.cuny.edu}
\thanks{The first author was supported by PSC-CUNY Research Awards Program 46 (Trad. A)}

\author{Won Geun Kim}
\address{Department of Mathematics, The City College of New York (CUNY), New York, NY 10031, USA}
\curraddr{}
\email{wkim@ccny.cuny.edu}
\thanks{}

\subjclass[2010]{Primary 22E70; Seconday 81R05}
\keywords{Clebsch-Gordan coefficient, Wigner coefficient, 3-j symbol, coupling coefficient,  angular momentum, Pascal's recurrence}
\dedicatory{Dedicated to Professor {\' O}lafsson on the occasion of his sixty-fifth birthday.}
\date{}

\begin{abstract}
A theory of Clebsch-Gordan coefficients for $SL(2, \mathbb{C})$ is given using only rational numbers.  Features include orthogonality relations, recurrence relations, and Regge's symmetry group. Results follow from elementary representation theory and properties of binomial coefficients.  A computational algorithm is given based on Pascal's recurrence.
\end{abstract}

\maketitle


\section{Introduction}

In Wigner's formula for Clebsch-Gordan coefficients for $SU(2)$ (also called Wigner coefficients; see also 3-j symbols), the main sum may be expressed with terms consisting of triple products of binomial coefficients.  Traditional approaches to the theory rely on special function methods:  hypergeometric series, Jacobi polynomials \cite{Vi}, and Hahn polynomials \cite{Ko}.  On the other hand, an approach to Racah's alternative formula is given in \cite{Ge}, 10.14--10.16 using mostly combinatorial methods, but still rooted in use of the inner product.  See also \cite{FR}, Ch.5, or \cite{Vi}, Ch. III, Sec. 8.9, (10).  

Several notions drive the present work:  that the tensor product problem exists independent of orthonormal bases,  that finite-dimensional representations of $SL(2,\cpx)$ are structurally rational, and that finite-dimensional representations are somewhat combinatorial.  Of course, the finite-dimensional representation theory of $SU(2)$ matches to that of $SL(2, \cpx)$ by Weyl's Unitary Trick.  Here a theory is presented with the following features:

\begin{enumerate}
\item an explicit formula for weight vectors, whose integral coordinates may be expressed through a generating function in two variables (Definitions 4.5, 4.6),
\item a summation formula for Clebsch-Gordan coefficients with rational values (Theorem 4.10, Corollary 8.5),
\item orthogonality relations (Proposition 4.4, Corollary 4.11),
\item an algorithm based on Pascal's recurrence for computer implementation (Theorem 6.1),
\item recurrence relations (Propositions 7.1, 7.2, 7.3),
\item a version of Regge's group of order 72 (Section 8), and
\item derivations of Wigner's and Racah's formulas (Section 9).
\end{enumerate}   

With a similar philosophy in mind, a projection operator method is given in Section 10.  The connection to Pascal's Triangle follows empirically from consideration of these projection operators. Additionally formula (10.4) corrects the scaling factor in the Casimir extremal projectors in \cite{Do},  Definitions 5.5 and 6.1, for the case of $SL(2, \real);$ further adjustments are needed for general groups.

Since the techniques used are algebraic, all the results in this work hold without changes for $SL(2, F)$, where $F$ is any algebraically closed field of characteristic zero. For instance, see \cite{Hu}, VI.22, Problem 7. In fact, results follow over a general field of characteristic zero; the Clebsch-Gordan decomposition follows from the highest weight assumption.

The authors thank the referee for useful comments and suggesting the comprehensive handbook \cite{VM}. The authors also thank Bent {\large $\varnothing$}rsted and Bart Van Steirteghem for helpful conversations. 

\section{Definitions and notation}

Let $SL(2, \cpx)$ be the linear semisimple Lie group of unimodular invertible matrices of size 2 over the complex numbers, let $sl(2)$ be the Lie algebra of traceless matrices of size 2 over $\cpx$, and let $U(sl(2))$ be the associated universal enveloping algebra.   Define the standard basis for $sl(2)$ by 
\begin{equation}e=\begin{bmatrix} 0 &1 \\ 0 & 0 \end{bmatrix}, \ f=\begin{bmatrix} 0 &0 \\ 1 & 0  \end{bmatrix}, \ h=\begin{bmatrix} 1 &0 \\ \ 0 & -1 \end{bmatrix}\end{equation}

Fix $n\ge 0,$ and let $V(n)$ denote the irreducible finite-dimensional representation for $SL(2, \cpx)$ with highest weight $n$ with respect to the $h$ and $e$.  In this case, $\text{dim}_\cpx\ V(n) = n+1,$ and if $\phi_n$ is a nonzero highest weight vector for $V(n)$, then the set 
\begin{equation} B = \{\phi_n,\  f\phi_n,\  f^2\phi_n,\  ...,\ f^n\phi_n\}\end{equation}

\noindent forms a basis for $V(n)$.  With respect to the Lie algebra action, one has 

\begin{enumerate}
\item[(LA.1)] $h\cdot f^i\phi_n = (n-2i)f^i\phi_n$,
\item[(LA.2)] $e\cdot f^i\phi_n = i(n-i+1)f^{i-1}\phi_n$, and
\item[(LA.3)] $f\cdot f^i\phi_n = f^{i+1}\phi_n$.  
\end{enumerate}
In particular,
\begin{enumerate}
\item[(LA.4)] $f^i\phi_n$ is a weight vector of weight $n-2i$,
\item[(LA.5)] $e\cdot \phi_n = 0$\qquad\qquad(highest weight vector with weight $n$), and
\item[(LA.6)] $f\cdot f^n\phi_n = 0$\qquad\ \ (lowest weight vector with weight $-n$).
\end{enumerate}

Note that, as a representation of $SL(2, \cpx)$, 
\begin{equation} V(n)\cong Sym^n(V(1)),\end{equation}
where $V(1)$ arises from the standard action of $SL(2, \cpx)$ on $\cpx^2.$
The space $V(1)$ admits a non-degenerate, invariant bilinear form by
\begin{equation}\langle (x_1, y_1), (x_2, y_2)\rangle = det \begin{bmatrix} x_1 & x_2 \\ y_1 & y_2\end{bmatrix}.\end{equation}

If $V$ and $W$ are representations of $SL(2, \cpx)$ with non-degenerate, invariant bilinear forms $\langle\cdot, \cdot\rangle_1$ and $\langle\cdot. \cdot\rangle_2$, we obtain a non-degenerate, invariant bilinear form on $V\otimes W$ by extending
\begin{equation}\langle v_1\otimes w_1, v_2\otimes w_2\rangle = \langle v_1, v_2\rangle_1 \ \langle w_1, w_2\rangle_2.\end{equation}
One may extend this definition to any finite number of factors in the tensor product.  In this way, one obtains a non-degenerate, invariant bilinear form on $Sym^n(V(1))$, unique up to a complex scalar.

For a given $V(n)$-type, any invariant form satisfies 
\begin{equation}\langle X\cdot v, w\rangle_{n} = \langle v, -X\cdot w\rangle_n\end{equation} for $X$ in $sl(2).$
Since
\begin{equation}\langle f\cdot v, w\rangle_n = \langle v, -f\cdot w\rangle_n,\end{equation}
rescaling the form such that 
\begin{equation}\langle \phi_n, f^n\phi_n\rangle_n = 1\end{equation}
determines a normalization for all basis vectors.
That is,
\begin{equation}\langle f^i\phi_n, f^{n-i}\phi_n\rangle_n = \langle \phi_n, (-1)^i f^n\phi_n\rangle_n = (-1)^i.\end{equation} and, among basis vectors, $f^i\phi_n$ pairs nontrivially only with $f^{n-i}\phi_n.$ 
Finally it follows that this form is symmetric when $n$ is even and alternating when $n$ is odd.

\section{Tensor products}

Notation for Clebsch-Gordan coefficients varies greatly in the literature, here presenting yet another variation.  In addition to dropping the traditional half-integer notation associated with $SU(2)$, this variation facilitates  an emphasis on exponents over weights as indices, an organizing principle based on matrices, and  a convenient indexing of a  five-dimensional polyhedron in subsequent work. Indices always satisfy 
\begin{equation} 0 \le i \le m,\quad 0\le j\le n,\quad 0\le k \le \min(m, n),\quad k\le i+j \le m+n-k.\end{equation} Unless stated otherwise, assume $m\le n$, so that $0\le k \le m \le n.$

One obtains an action of $SL(2, \cpx)$ on $V(m)\otimes V(n)$ by 
\begin{equation}g\cdot(v\otimes w) = gv\otimes gw\end{equation}
and an action of $sl(2)$ by

\begin{equation}X\cdot (v\otimes w) = Xv\otimes w + v\otimes Xw.\end{equation}
With these actions, one obtains

\begin{proposition}[Clebsch-Gordan Decomposition]  With respect to representations of $SL(2, \cpx)$, one has
\begin{equation}V(m)\otimes V(n) \cong V(n-m)\oplus V(n-m+2) \oplus \dots \oplus V(m+n)\end{equation}
\end{proposition}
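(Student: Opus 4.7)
The plan is to find all highest-weight vectors in $V(m)\otimes V(n)$ by a direct weight-counting argument, and then invoke complete reducibility of finite-dimensional $sl(2)$-modules to read off the simple summands.

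First I would work in the product basis $\{f^i\phi_m \otimes f^j\phi_n : 0\le i\le m,\ 0\le j\le n\}$. By (LA.1) together with (3.3), each such tensor is a weight vector of weight $(m+n)-2(i+j)$, so the whole basis simultaneously diagonalizes $h$. Writing $s=i+j$, the weight-$(m+n-2s)$ space $V_{m+n-2s}$ has dimension equal to the number of lattice points $(i,j)\in[0,m]\times[0,n]$ with $i+j=s$. Under the standing assumption $m\le n$, a direct count gives $s+1$ for $0\le s\le m$, then $m+1$ for $m\le s\le n$, and finally $m+n-s+1$ for $n\le s\le m+n$, a symmetric trapezoidal profile.

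Next, complete reducibility of finite-dimensional $sl(2)$-modules lets me write $V(m)\otimes V(n)\cong \bigoplus_{k\ge 0} c_k\, V(k)$, and it remains to identify the multiplicities $c_k$. By (LA.2), on each simple summand $V(k)$ the operator $e$ maps the weight-$j$ space isomorphically onto the weight-$(j+2)$ space whenever $-k\le j<k$; summing over summands, $e\colon V_j\to V_{j+2}$ is surjective for every $j\ge 0$, and its kernel is precisely the span of highest-weight vectors of weight $j$, which consists of the $c_j$ highest-weight lines of the $V(j)$-summands. Rank--nullity therefore yields $c_j = \dim V_j - \dim V_{j+2}$.

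The final step is purely arithmetic. With $s=(m+n-k)/2$, the trapezoidal formula gives $\dim V_k-\dim V_{k+2}=1$ along the rising slope $0\le s\le m$ (including $s=0$, where $V_{k+2}=0$) and $\dim V_k-\dim V_{k+2}=0$ along the plateau $m<s\le n$; restricting to $k\ge 0$, this forces $c_k=1$ precisely for $k\in\{n-m,\ n-m+2,\ \dots,\ m+n\}$ and $c_k=0$ otherwise, which is the claimed decomposition. The only mildly delicate point is the lattice-point enumeration near the break values $s=m$ and $s=n$, which produces the plateau in the weight-multiplicity function; everything else is bookkeeping on top of complete reducibility.
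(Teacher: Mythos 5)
Your proof is correct, but it takes a genuinely different route from the paper's. The paper constructs, for each $0\le k\le m$, an explicit highest weight vector $\phi_{m,n,k}$ of weight $m+n-2k$ (Proposition 4.1, with binomial-coefficient coordinates), observes that each one generates a subrepresentation isomorphic to $V(m+n-2k)$, and then checks that $\sum_{s=0}^m(n-m+2s+1)=(m+1)(n+1)=\dim(V(m)\otimes V(n))$, so these subrepresentations exhaust the tensor product; complete reducibility (invariant complements) then makes the sum direct. You instead run the standard weight-multiplicity argument: compute the trapezoidal profile of $\dim V_j$, use (LA.2) to see that $e\colon V_j\to V_{j+2}$ is surjective for $j\ge 0$ with kernel exactly the span of the highest weight vectors of weight $j$, and conclude $c_j=\dim V_j-\dim V_{j+2}$ by rank--nullity; the arithmetic at the break points $s=m$ and $s=n$ is handled correctly, and the restriction to $s\le (m+n)/2$ keeps you on the rising slope and plateau only. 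Your argument is self-contained at this point of the paper (it does not forward-reference Propositions 4.1 and 4.9, as the paper's proof does) and it generalizes verbatim to computing multiplicities in any finite-dimensional module from its weight multiplicities alone; what it does not produce is the explicit embedding of each summand, which is precisely the extra information the paper needs for everything that follows (the coordinates $c_{m,n,k}(i,j)$ and the Clebsch--Gordan coefficients). Both proofs lean on complete reducibility, yours at the outset to write the module as $\bigoplus_k c_k\,V(k)$, the paper's at the end to turn a sum of subrepresentations into a direct sum.
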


\begin{proof}[Proof using Section 4] This proof works over a field of characteristic zero. Irreducible, finite-dimensional highest weight representations are determined by their weight sets as in LA.1-6.  Explicit highest and lowest weight vectors for each summand are given in Propositions 4.1 and 4.9. That is, each summand occurs as a subrepresentation of the tensor product, and counting dimensions gives
\begin{equation}\sum\limits_{s=0}^m (n-m+2s+1)=(m+1)(n+1).\end{equation}
Since each irreducible subrepresentation has an invariant complement, the sum is direct.
\end{proof}

The weight set of $V(m)\otimes V(n)$ is all pairwise sums of weights for $V(m)$ with weights for $V(n)$. In particular,
\begin{equation}h\cdot (f^i\phi_m\otimes f^j\phi_n) = (m-2i + n - 2j)\ f^i\phi_m\otimes f^j\phi_n\end{equation}
One immediately notes that
\begin{equation}{\bf B} = \{ f^i\phi_m\otimes f^j\phi_n\},\qquad (0\le i \le m,\ 0\le j\le n)\end{equation}
forms a basis of weight vectors for $V(m)\otimes V(n),$ but in general this basis does not respect the decomposition given above.  Furthermore, the subspace of weight vectors of weight $m+n-2p$ has basis
\begin{equation}{\bf B}_p = \{ f^i\phi_m\otimes f^j\phi_n\},\qquad (i+j=p).\end{equation}

In the classical case for $SU(2)$, one describes an orthonormal basis of weight vectors for each $V(m')$ in the decomposition of $V(m)\otimes V(n).$  Choose orthonormal bases $\{ u_i\}$ (where $u_i$ has weight $m-2i$) and $\{ v_j\}$ (where $v_j$ has weight $n-2j$) for $V(m)$ and $V(n)$, respectively. After setting conventions, one may define an orthonormal basis $\{ w_{m, n, k}(p)\}$ (weight $m+n-2p$) for each $V(m+n-2k)$ in the tensor product such that
\begin{equation}w_{m, n, k}(p) = \sum_{i+j=p}  c'_{m, n, k}(i, j)\ u_i\otimes v_j\end{equation}
with each $c'_{m, n, k}(i, j)$ real.

In turn, the Clebsch-Gordan coefficients are defined by
\begin{equation}u_i\otimes v_j = \sum_{k=0}^m\ C'_{m, n, k}(i, j) w_{m, n, k}(i+j).\end{equation}

After converting to the present notation for indices and changing $s$ to $i-s$ in the summation, one has Wigner's formula in binomial form:

\begin{proposition}[\cite{Vi}, Ch. III.8.3, Equation ($14^\prime$)]

 {\fontsize{10}{10}\selectfont
\begin{equation*}C'_{m,n,k}(i, j)\ =\sqrt{  \frac{(m+n-2k+1)\begin{pmatrix} m+n-k \\ m-i,\ n-j,\ *\end{pmatrix} }{{(m+n-k+1)\ \begin{pmatrix} m+n-k \\ m-k,\ n-k,\ k\end{pmatrix}  \begin{pmatrix} m+n-k \\ i,\ j,\ *\end{pmatrix}}}}\cdot S
\end{equation*}
}
where 
\begin{equation*}S=\sum\limits_{s=0}^k \ (-1)^{s} \begin{pmatrix} i+j-k \\ i-s\end{pmatrix}\begin{pmatrix} m-s \\ k-s\end{pmatrix}\begin{pmatrix} n-k+s \\ s\end{pmatrix}\end{equation*}
\end{proposition}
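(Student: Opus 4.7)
The plan is to construct an explicit orthonormal basis of weight vectors for each summand $V(m+n-2k) \subset V(m) \otimes V(n)$ and invert (3.10). By orthonormality, $C'_{m,n,k}(i,j) = \langle u_i \otimes v_j, w_{m,n,k}(i+j)\rangle$, so it suffices to produce $w_{m,n,k}(p)$ explicitly in the basis $\{u_i \otimes v_j\}$.

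First I would construct a highest weight vector $\tilde w$ of the $V(m+n-2k)$-summand. It lies in the weight space of weight $m+n-2k$, which is spanned by $\{f^s\phi_m \otimes f^{k-s}\phi_n\}_{s=0}^k$. Writing $\tilde w = \sum_s a_s\, f^s\phi_m \otimes f^{k-s}\phi_n$ and imposing $e \cdot \tilde w = 0$ via (LA.2) gives a two-term recursion $(s+1)(m-s)a_{s+1} + (k-s)(n-k+s+1)a_s = 0$, whose solution (with $a_0 = 1$) is
\begin{equation*}
a_s = (-1)^s\, \frac{k!\,(n-k+s)!\,(m-s)!}{(k-s)!\,(n-k)!\,s!\,m!}.
\end{equation*}
This already captures the alternating sign and the three binomials of the sum $S$.

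Next, I lower using the fact that $f$ is a derivation on the tensor product. The binomial theorem yields $f^{p-k}(f^s\phi_m \otimes f^{k-s}\phi_n) = \sum_t \binom{p-k}{t} f^{s+t}\phi_m \otimes f^{p-s-t}\phi_n$; reindexing $i = s+t$ and collecting,
\begin{equation*}
\tilde w_{m,n,k}(p) = f^{p-k}\tilde w = \sum_{i+j=p} \biggl(\sum_s a_s \binom{i+j-k}{i-s}\biggr) f^i\phi_m \otimes f^j\phi_n,
\end{equation*}
and after pulling out the common factor $1/(m!\,(n-k)!)$ the parenthesized sum becomes precisely $S$. Conversion to the orthonormal basis uses $\|f^i\phi_m\|^2 = \langle \phi_m, e^i f^i\phi_m\rangle = i!\,m!/(m-i)!$ (iterating (LA.2) with $f^*=e$), so $u_i = \sqrt{(m-i)!/(i!\,m!)}\,f^i\phi_m$ and each summand acquires a factor $\sqrt{i!\,j!\,m!\,n!/((m-i)!\,(n-j)!)}$.

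The remaining task is to normalize $\tilde w_{m,n,k}(p)$ to unit length, and this is the main obstacle. Since $w_{m,n,k}(p)$ lies in the irreducible $V(m+n-2k)$, iterating (LA.2) gives $\|\tilde w_{m,n,k}(p)\|^2 = (p-k)!\,(m+n-2k)!/(m+n-k-p)! \cdot \|\tilde w\|^2$, and
\begin{equation*}
\|\tilde w\|^2 = \sum_{s=0}^k a_s^2 \cdot \frac{s!\,m!\,(k-s)!\,n!}{(m-s)!\,(n-k+s)!}.
\end{equation*}
After cancellation this reduces to the Chu--Vandermonde identity $\sum_s \binom{n-k+s}{s}\binom{m-s}{k-s} = \binom{m+n-k+1}{k}$, yielding $\|\tilde w\|^2 = k!\,n!\,(m-k)!\,(m+n-k+1)!/((n-k)!\,m!\,(m+n-2k+1)!)$. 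Assembling this norm factor, the basis rescalings, and the sum $S$ reproduces exactly the trinomial square-root prefactor in the proposition, with $(m+n-2k+1)$ coming from $(m+n-2k+1)!=(m+n-2k+1)(m+n-2k)!$ and the three trinomials $\binom{m+n-k}{m-i,n-j,*}$, $\binom{m+n-k}{i,j,*}$, $\binom{m+n-k}{m-k,n-k,k}$ arising from regrouping the assembled factorials.
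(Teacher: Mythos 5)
Your argument is correct, and at bottom it uses the same three ingredients as the paper: the highest-weight recursion forced by $e\cdot\tilde w=0$ (Proposition 4.1), lowering by the derivation property of $f$ to produce the factor $\binom{i+j-k}{i-s}$ (Definition 4.6/Proposition 4.7 --- your inner sum is literally the paper's coordinate $c_{m,n,k}(i,j)$), and the convolution identity $\sum_s\binom{n-k+s}{s}\binom{m-s}{k-s}=\binom{m+n-k+1}{k}$ for the normalization (the same identity appearing in the proof of Theorem 4.10). The organizational difference is real, though: you run the whole computation inside the unitary picture, with $f^{\dagger}=e$ and all norms obtained by iterating (LA.2), whereas the paper first develops the rational coefficients $C_{m,n,k}(i,j)$ with respect to the invariant \emph{bilinear} form (Theorem 4.10, Corollary 8.5) and only in Section 9 converts to the orthonormal convention via the Hermitian form $\langle\langle v_1,v_2\rangle\rangle=\langle v_1,\overline{w\cdot v_2}\rangle$ built from the Weyl element. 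Your route is self-contained and lands directly on $c_{m,n,k}(i,j)$, bypassing the R23/Regge step the paper needs to turn $c_{m,n,k}(m-i,n-j)$ back into $c_{m,n,k}(i,j)$; the paper's route buys the rational theory as a reusable intermediate and keeps all square roots quarantined in one final renormalization. Two harmless bookkeeping slips: the constant you must factor out of the inner sum to leave exactly $S$ is $k!\,(m-k)!/m!$, not $1/(m!\,(n-k)!)$ (this washes out because the overall scale of $\tilde w$ cancels against $\|\tilde w_{m,n,k}(p)\|$), and you should remark that the choices $a_0=1>0$ and the positive square root reproduce the sign convention implicit in the stated formula (they agree with the paper's leading coefficient $\binom{m}{k}>0$).
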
 

\noindent   For non-negative integers $a, b , c$, multinomial coefficients are defined by

\begin{equation}\begin{pmatrix} a+b \\ a\end{pmatrix} = \frac{(a+b)!}{a!\ b!}\qquad\text{and}\qquad\begin{pmatrix} a+b+c \\ a,\ b,\ c\end{pmatrix} = \frac{(a+b+c)!}{a!\ b!\ c!}.\end{equation}

\noindent When notation is cumbersome, the lower index $c$ is replaced with $*$. In this work, these coefficients are set to zero if negative entries occur.

\section{Generating functions for weight vectors}

Among conventions, of key importance is a fixed choice of weight vectors. Here conventions are set by a uniform description of highest weight vectors.  This choice determines the weight vector bases for all $V(m')$-types that occur, and, by duality, fixes sign conventions for Clebsch-Gordan coefficients.

\begin{proposition}[Highest Weight Vector for $V(m+n-2k)$]  Define the highest weight vector $\phi_{m,n,k}$ of $V(m+n-2k)$ in $V(m)\otimes V(n)$ such that the coordinate vector $\lbrack\phi_{m, n, k}\rbrack_{\bf B_k}$ in $\cpx^{k+1}$ is given by

{\fontsize{9.7}{10}\selectfont
\begin{equation*}\biggl(\begin{pmatrix}m \\ k \end{pmatrix}\begin{pmatrix}n-k \\ 0 \end{pmatrix}, \dots, (-1)^l\begin{pmatrix}m-l \\ k-l \end{pmatrix}\begin{pmatrix}n-k+l \\ l \end{pmatrix}, \dots, (-1)^k\begin{pmatrix}m-k \\ 0 \end{pmatrix}\begin{pmatrix}n \\ k \end{pmatrix}\biggr).\end{equation*}
}
\end{proposition}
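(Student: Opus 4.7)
The plan is to verify directly that the vector
\begin{equation*}
\phi_{m,n,k} = \sum_{l=0}^{k} (-1)^l \binom{m-l}{k-l}\binom{n-k+l}{l}\, f^l\phi_m \otimes f^{k-l}\phi_n
\end{equation*}
encoded by the listed coordinate vector really is a highest weight vector of weight $m+n-2k$. The weight statement is automatic, since every element of $\mathbf{B}_k$ has $h$-weight $m+n-2k$ by LA.1 together with the tensor product $h$-action. Nonvanishing is also automatic: the $l=0$ coefficient equals $\binom{m}{k}\neq 0$ under the standing assumption $k\le m$. So the content of the proposition reduces to the single identity $e\cdot\phi_{m,n,k}=0$; once this is in hand, the Clebsch-Gordan Decomposition (Proposition 3.1) combined with multiplicity one of each isotype forces $\phi_{m,n,k}$ to generate the asserted $V(m+n-2k)$-summand.

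To compute $e\cdot\phi_{m,n,k}$, I would apply the Leibniz-style tensor product action together with (LA.2), yielding
\begin{equation*}
e\cdot(f^l\phi_m \otimes f^{k-l}\phi_n) = l(m-l+1)\, f^{l-1}\phi_m \otimes f^{k-l}\phi_n + (k-l)(n-k+l+1)\, f^l\phi_m \otimes f^{k-l-1}\phi_n.
\end{equation*}
Thus $e\cdot\phi_{m,n,k}$ lies in the weight $(m+n-2k+2)$ space, spanned by $\{f^a\phi_m\otimes f^{k-1-a}\phi_n : 0\le a\le k-1\}$. For each fixed $a$, exactly two summands of the sum over $l$ contribute: the first term in the display via $l = a+1$, and the second term via $l = a$. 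These arrive with opposite overall signs because $(-1)^{a+1} = -(-1)^a$.

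The only real work, and the main (mild) obstacle, is a short binomial identity showing these two contributions cancel. Using the absorption rules
\begin{equation*}
(m-a)\binom{m-a-1}{k-a-1} = (k-a)\binom{m-a}{k-a}, \qquad (a+1)\binom{n-k+a+1}{a+1} = (n-k+a+1)\binom{n-k+a}{a},
\end{equation*}
both contributions collapse to the common value $(k-a)(n-k+a+1)\binom{m-a}{k-a}\binom{n-k+a}{a}$, so their difference is zero. Hence the coefficient of every basis vector in $e\cdot\phi_{m,n,k}$ vanishes, $e\cdot\phi_{m,n,k}=0$, and the proposition follows. The bookkeeping, rather than any clever identity, is the only point requiring care.
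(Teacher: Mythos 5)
Your proposal is correct and takes essentially the same approach as the paper: both reduce the claim to the single condition $e\cdot\phi_{m,n,k}=0$, computed by applying (LA.2) to the basis $\mathbf{B}_k$, which yields a two-term relation between consecutive coordinates. The only difference is direction — the paper solves that recurrence forward from $c_0=\binom{m}{k}$ to generate the coefficients, whereas you substitute the closed form and verify the cancellation via absorption identities — so the content is identical.
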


\begin{proof} See \cite{HT}, Sec. II, Prop. 2.1.1.  Assume the coordinate vector has the form
\begin{equation*}(c_0, c_1, c_2, \dots, c_k).\end{equation*}
with $c_0=\begin{pmatrix}m \\ k \end{pmatrix}.$   For each basis vector, applying (LA.2) gives
{\fontsize{9}{10}\selectfont
\begin{equation*}e\cdot (f^i\phi_m\otimes f^{k-i}\phi_n) = i(m-i+1) f^{i-1}\phi_m\otimes f^{k-i}\phi_n + (k-i)(n-k+i+1)f^i\otimes f^{k-i-1}\phi_n.\end{equation*}
}
\noindent Since $e\cdot \phi_{m,n,k} = 0,$ each $c_i$ may be computed from $c_{i-1}$. For instance,
\begin{equation*}k(n-k+1) \begin{pmatrix}m \\ k \end{pmatrix} + m\, c_1 = 0,\end{equation*}
\begin{equation*}(k-1)(n-k+2) c_1 + 2(m-1)\, c_2 = 0,\end{equation*}
and so on. Continuing in this manner, the proposition holds.
\end{proof}

Weight vectors for $V(m+n-2k)$ are obtained by repeated application of $f$ to $\phi_{m,n,k}$. Thus a weight vector basis for $V(m+n-2k)$ in $V(m)\otimes V(n)$ is given by
\begin{equation}{\bf B^k} = \{\phi_{m,n,k}, \ f\phi_{m,n,k}, f^2\phi_{m,n,k},\dots, f^{m+n-2k}\phi_{m,n,k}\}\end{equation}

\begin{definition}Define the coordinate functions $c_{m,n,k}(i, j)$ by the formula
\begin{equation}f^{p-k}\phi_{m,n,k} = \sum\limits_{i+j=p}\ c_{m,n,k}(i,j)\ f^i\phi_m\otimes f^j\phi_n.\end{equation}
\end{definition}

\begin{definition} Define the Clebsch-Gordan coefficients $C_{m,n,k}(i,j)$ by the formula
\begin{equation}f^i\phi_m\otimes f^j\phi_n = \sum\limits_{k=0}^m C_{m,n,k}(i, j)\ f^{i+j-k}\phi_{m,n,k}.\end{equation}
\end{definition}

It follows immediately from these definitions that

\begin{proposition}[Orthogonality Relations 1]
\begin{equation}\sum\limits_{i+j=p}\ c_{m,n,k}(i, j)\ C_{m,n,k'}(i, j) = \delta_{k, k'}\end{equation}
\begin{equation}\sum\limits_{k=0}^m\ c_{m,n,k}(i, j)\ C_{m, n, k}(i', j') = \delta_{i,i'}\delta_{j, j'}\quad (i+j=i'+j')\end{equation}
\end{proposition}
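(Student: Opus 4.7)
The proof is essentially a bookkeeping argument: Definitions 4.5 and 4.6 are statements about two mutually inverse change-of-basis matrices on each weight space of $V(m)\otimes V(n)$, and the orthogonality relations are exactly the identity-matrix identities for these two matrices.

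First I would fix a weight $m+n-2p$ and identify two bases of the corresponding weight space. On the one hand, Equation (3.8) gives the ``product'' basis $\mathbf{B}_p=\{f^i\phi_m\otimes f^j\phi_n : i+j=p\}$. On the other hand, by the Clebsch-Gordan Decomposition (Proposition 3.1) combined with Equation (4.1), the vectors $f^{p-k}\phi_{m,n,k}$ for those $k$ with $0\le k\le m$ and $k\le p\le m+n-k$ form a basis $\mathbf{B}^{(p)}$ of the same weight space. A routine count confirms the cardinalities match, and the $k$ lying outside the stated range contribute zero vectors, so we may freely sum $k$ from $0$ to $m$ with the understanding that $C_{m,n,k}(i,j)=0$ and $c_{m,n,k}(i,j)=0$ whenever the associated vector vanishes.

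Next, by Definition 4.5 the matrix $[c_{m,n,k}(i,j)]$ expresses $\mathbf{B}^{(p)}$ in terms of $\mathbf{B}_p$, while by Definition 4.6 the matrix $[C_{m,n,k}(i,j)]$ expresses $\mathbf{B}_p$ in terms of $\mathbf{B}^{(p)}$. Substituting Definition 4.6 into Definition 4.5 yields
\begin{equation*}
f^{p-k}\phi_{m,n,k}=\sum_{i+j=p}c_{m,n,k}(i,j)\sum_{k'=0}^m C_{m,n,k'}(i,j)\,f^{p-k'}\phi_{m,n,k'},
\end{equation*}
and interchanging the two finite sums gives
\begin{equation*}
f^{p-k}\phi_{m,n,k}=\sum_{k'=0}^m\Bigl(\sum_{i+j=p}c_{m,n,k}(i,j)\,C_{m,n,k'}(i,j)\Bigr)f^{p-k'}\phi_{m,n,k'}.
\end{equation*}
Since $\mathbf{B}^{(p)}$ is linearly independent, comparing coefficients yields the first orthogonality relation (4.5).

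For the second relation, I would run the substitution in the opposite order: insert Definition 4.5 into Definition 4.6 to obtain
\begin{equation*}
f^i\phi_m\otimes f^j\phi_n=\sum_{i'+j'=i+j}\Bigl(\sum_{k=0}^m C_{m,n,k}(i,j)\,c_{m,n,k}(i',j')\Bigr)f^{i'}\phi_m\otimes f^{j'}\phi_n,
\end{equation*}
and invoke linear independence of $\mathbf{B}_{i+j}$ to read off (4.6). There is no real obstacle here; the only mildly delicate point, already noted, is the convention that the coefficients vanish for out-of-range $k$, which is why the outer sum in (4.6) can legitimately run over all $k$ with $0\le k\le m$.
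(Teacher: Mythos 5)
Your argument is correct and is exactly what the paper intends: the paper simply asserts that the relations ``follow immediately from these definitions,'' and your substitution of one defining expansion into the other followed by comparison of coefficients against a linearly independent set is that immediate argument written out. The only slip is labeling: the defining formulas for $c_{m,n,k}(i,j)$ and $C_{m,n,k}(i,j)$ are Definitions 4.2 and 4.3 (Definitions 4.5 and 4.6 are the generating-function descriptions), but this does not affect the proof.
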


Next recall the binomial series for positive integer $r$
\begin{equation}\frac{1}{(1-x)^r} = \sum_{i=0}^\infty \begin{pmatrix} r-1+i \\ i \end{pmatrix} x^i.\end{equation}
One observes that the coordinates of the highest weight vector are products of terms from ascending and descending binomial series, which yields

\begin{definition}[Generating Function for Highest Weight Vectors] The coordinate $c_{m,n,k}(i, k-i)$ for $f^i\phi_m\otimes f^{k-i}\phi_n$ in $\lbrack \phi_{m,n,k}\rbrack_{\bf B_k}$ is the coefficient of $x^{k-i}y^{i}$ in 
\begin{equation}\frac{1}{(1-x)^{m-k+1}(1+y)^{n-k+1}}\end{equation}
\end{definition}

Noting that
\begin{equation}f\cdot (f^i\phi_m\otimes f^j\phi_n) = f^{i+1}\phi_m\otimes f^j\phi_n + f^i\phi_m\otimes f^{j+1}\phi_n,\end{equation}
coordinate vectors for these weight vectors are obtained by binomial recursion of coordinates (Pascal's Recurrence).  This effect translates immediately to generating functions.

\begin{definition}[Generating Function for Weight Vectors] Consider the weight vector $f^{p-k}\phi_{m,n,k}$ of weight $m+n-2p$ in the $V(m+n-2k)$-type. For $i+j=p$, the coordinate $c_{m,n,k}(i, j)$ of $f^i\phi_m\otimes f^j\phi_n$ in $f^{p-k}\phi_{m,n,k}$ is the coefficient of $x^jy^i$ in 
\begin{equation}\frac{(x+y)^{i+j-k}}{(1-x)^{m-k+1}(1+y)^{n-k+1}}\end{equation}
\end{definition}

To obtain an explicit formula similar to Wigner's formula, one expands the binomial series as an alternating sum of triple products of binomial coefficients.  In the following formulas, one uses the convention that binomial coefficients with negative entries are set to zero.

\begin{proposition}[Coordinate vector of weight $m+n-2i-2j$ in $V(m+n-2k)$]
\begin{equation}c_{m,n,k}(i,j) = \sum_{l=0}^{k} (-1)^l  \begin{pmatrix}i+j-k \\ i-l \end{pmatrix}\begin{pmatrix}m-l \\ k-l \end{pmatrix}\begin{pmatrix}n-k+l \\ l \end{pmatrix}\end{equation}
\end{proposition}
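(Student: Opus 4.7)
The plan is to read the statement directly off of Definition 4.6 by expanding the generating function as a power series and extracting the coefficient of $x^j y^i$. Definition 4.6 already packages all the representation-theoretic content (the highest weight vector of Proposition 4.1 plus Pascal's recurrence coming from the $f$-action); at this point the proposition is a purely formal identity between a triple-product sum and a single residue on a rational function.

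The steps I would take, in order. First, apply Definition 4.6 to assert that $c_{m,n,k}(i,j)$ equals the coefficient of $x^{j}y^{i}$ in
\begin{equation*}
\frac{(x+y)^{i+j-k}}{(1-x)^{m-k+1}(1+y)^{n-k+1}}.
\end{equation*}
Second, expand each of the three factors:
\begin{equation*}
(x+y)^{i+j-k}=\sum_{a}\begin{pmatrix} i+j-k \\ a\end{pmatrix} x^{a}y^{i+j-k-a},
\end{equation*}
and, using (3.5) (the binomial series for $r=m-k+1$ and $r=n-k+1$),
\begin{equation*}
\frac{1}{(1-x)^{m-k+1}}=\sum_{b}\begin{pmatrix} m-k+b \\ b\end{pmatrix} x^{b},\qquad \frac{1}{(1+y)^{n-k+1}}=\sum_{c}(-1)^{c}\begin{pmatrix} n-k+c \\ c\end{pmatrix} y^{c}.
\end{equation*}
Third, multiply the three expansions and collect the coefficient of $x^{j}y^{i}$, which forces $a+b=j$ and $(i+j-k-a)+c=i$, i.e.\ $c=k+a-j$. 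Setting $l=c$ eliminates $a=j-k+l$ and $b=k-l$, which collapses the triple sum into the single sum
\begin{equation*}
\sum_{l=0}^{k}(-1)^{l}\begin{pmatrix} i+j-k \\ j-k+l\end{pmatrix}\begin{pmatrix} m-l \\ k-l\end{pmatrix}\begin{pmatrix} n-k+l \\ l\end{pmatrix}.
\end{equation*}
Fourth, apply the symmetry $\binom{N}{r}=\binom{N}{N-r}$ to the first factor with $N=i+j-k$ and $r=j-k+l$, giving $\binom{i+j-k}{i-l}$, which is exactly the stated form.

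The only thing to check carefully is the bookkeeping of ranges. The outer summation index $l$ is unrestricted a priori, but the convention that binomial coefficients with negative entries vanish kills all terms outside $0\le l\le k$ (the middle factor $\binom{m-l}{k-l}$ vanishes for $l<0$ or $l>k$, assuming $m\ge k$, which is part of (3.1)). This lets me write the upper limit as $k$ cleanly. There is no analytic obstacle: everything is a finite sum because $(x+y)^{i+j-k}$ is a polynomial, and convergence of the two binomial series near the origin is automatic. The main (and only) potentially delicate step is the index substitution, where an off-by-one error would propagate; I would double-check it by specializing to small cases such as $k=0$ (recovering $c_{m,n,0}(i,j)=\binom{i+j}{i}\binom{m}{0}\binom{n}{0}$) and $i+j=k$ (recovering Proposition 4.1 via the $(-1)^l$ alternating pattern).
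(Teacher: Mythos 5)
Your proposal is correct and is precisely the route the paper takes: the text preceding Proposition 4.7 says only that "one expands the binomial series as an alternating sum of triple products of binomial coefficients," and your expansion of Definition 4.6, the index substitution $l=c$, $a=j-k+l$, $b=k-l$, and the final symmetry $\binom{i+j-k}{j-k+l}=\binom{i+j-k}{i-l}$ supply exactly the omitted details. The range bookkeeping via the vanishing convention and the sanity checks at $k=0$ and $i+j=k$ are also consistent with the paper's conventions and with Proposition 4.1.
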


The corresponding formula for Clebsch-Gordan coefficients follows from a variant of the classical Chu-Vandermonde identity (\cite{AAR}, p. 67) and an explicit form for the lowest weight vectors.

\begin{proposition}  Suppose $r, s\ge0$ and $0\le k \le \min(r, s)$. Then
\begin{equation}\sum_{l=0}^k  (-1)^{l} \begin{pmatrix} k \\ l \end{pmatrix} \begin{pmatrix} r+s-k+l \\ s-k+l \end{pmatrix} =  (-1)^k \begin{pmatrix} r + s -k\\ s \end{pmatrix}.\end{equation}
\end{proposition}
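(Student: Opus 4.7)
My plan is to prove the identity by the generating-function method, reducing the sum to reading off a single coefficient of a product of two easy factors.

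The first step is to symmetrize the inner binomial. Since $(r+s-k+l)-(s-k+l) = r$, one has
\begin{equation*}
\binom{r+s-k+l}{s-k+l} = \binom{r+s-k+l}{r},
\end{equation*}
which is the coefficient of $x^r$ in $(1+x)^{r+s-k+l}$. Substituting this into the sum and interchanging the summation with the coefficient extraction, the sum equals the coefficient of $x^r$ in
\begin{equation*}
\sum_{l=0}^k (-1)^l \binom{k}{l}(1+x)^{r+s-k+l} = (1+x)^{r+s-k}\sum_{l=0}^k \binom{k}{l}(-(1+x))^l\cdot(-1)^{l-l}.
\end{equation*}

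The second step is to recognize the inner sum as a binomial expansion: it is $(1-(1+x))^k = (-x)^k$. Hence the full generating function collapses to
\begin{equation*}
(-1)^k x^k (1+x)^{r+s-k},
\end{equation*}
and the coefficient of $x^r$ is $(-1)^k \binom{r+s-k}{r-k} = (-1)^k \binom{r+s-k}{s}$, using that $(r+s-k)-(r-k)=s$. This is exactly the claimed right-hand side, and the hypothesis $0 \le k \le \min(r,s)$ guarantees that the binomial coefficient is nonzero and that no negative-entry conventions intrude in the intermediate steps.

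The only place where care is needed is the rewriting $\binom{r+s-k+l}{s-k+l} = \binom{r+s-k+l}{r}$, which requires $s-k+l \ge 0$: for $l < k-s$ the original term would be zero by the paper's convention, while the rewritten term $\binom{r+s-k+l}{r}$ could still be nonzero when $r+s-k+l \ge r$, i.e. $l \ge k-s$. So one should either restrict the summation to $l \ge \max(0,k-s)$ from the outset (the missing terms vanish by convention), or verify that for $l < k-s$ both forms agree as zero — which they do here since $\min(r,s) \ge k$ forces $k-s \le 0$, so no adjustment is needed. With this subtlety noted, the computation above is the entire proof.
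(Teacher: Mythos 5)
Your proof is correct, and it is a genuine variant of the paper's argument rather than a reproduction of it. The paper also works with generating functions, but in convolution form: it expands $\frac{(1-x)^k}{(1-x)^{r+1}} = \frac{1}{(1-x)^{r-k+1}}$ as a product of the finite alternating binomial sum with the negative binomial series, compares coefficients of $x^s$ on both sides to get $\sum_{l=0}^k (-1)^l \binom{k}{l}\binom{r+s-l}{s-l} = \binom{r+s-k}{s}$, and then must reindex by $l \mapsto k-l$ to reach the identity as stated, which is where the $(-1)^k$ on the right-hand side comes from. You instead write each term as $[x^r](1+x)^{r+s-k+l}$, pull the coefficient extraction outside the sum, and collapse the alternating sum to $\bigl(1-(1+x)\bigr)^k(1+x)^{r+s-k} = (-x)^k(1+x)^{r+s-k}$, which lands directly on the stated form with no reindexing. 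The trade-off is mild: your route is slightly more self-contained and produces the sign for free, while the paper's version stays inside the negative-binomial-series framework it already uses for the weight-vector generating functions in Definitions 4.5 and 4.6, so the lemma reads as one more instance of the same machinery. Your closing remark on the convention for negative lower entries is handled correctly: since $k \le s$, the lower index $s-k+l$ is nonnegative for all $l$ in the range, so the symmetrization $\binom{r+s-k+l}{s-k+l} = \binom{r+s-k+l}{r}$ is legitimate term by term, and $k \le r$ ensures the final coefficient $\binom{r+s-k}{r-k}$ is the intended one.
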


\begin{proof} With $0\le k \le r,$ expand 
\begin{equation*}\frac{(1-x)^k}{(1-x)^{r+1}} = \frac{1}{(1-x)^{r-k+1}}\end{equation*}
into series
\begin{equation*}\sum_{l'=0}^k (-1)^{l'} \begin{pmatrix} k \\ l' \end{pmatrix}x^{l'}\ \cdot \sum_{l''=0}^\infty \begin{pmatrix} r+l'' \\ l'' \end{pmatrix}x^{l''} = \sum_{l=0}^\infty \begin{pmatrix} r-k+l \\ l \end{pmatrix} x^l.\end{equation*}
Comparing coefficients for $x^s$ yields
\begin{equation*}\sum_{l=0}^k  (-1)^{l} \begin{pmatrix} k \\ l \end{pmatrix} \begin{pmatrix} r+s-l \\ s-l \end{pmatrix} =  \begin{pmatrix} r-k + s \\ s \end{pmatrix}.\end{equation*}
Substituting $l'=k-l$ gives (4.11).
\end{proof}

\begin{proposition}[Lowest Weight Vector for $V(m+n-2k)$] The coordinate vector of the lowest weight vector $f^{m+n-2k}\phi_{m,n,k}$ of $V(m+n-2k)$ in $V(m)\otimes V(n)$ is given by
$$\lbrack f^{m+n-2k}\phi_{m, n, k}\rbrack_{\bf B_{m+n-k}}\  =\ \begin{pmatrix}m+n-2k \\ m-k \end{pmatrix} (1, -1, 1,\dots, (-1)^k)\quad\in \cpx^{k+1}.$$
\end{proposition}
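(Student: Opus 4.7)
The plan is to compute each coordinate of $f^{m+n-2k}\phi_{m,n,k}$ directly from the generating function in Definition 4.6. For $l\in\{0,1,\dots,k\}$ set $i=m-k+l$ and $j=n-l$, so that $i+j=m+n-k$; then the target coordinate $c_{m,n,k}(m-k+l,\,n-l)$ is the coefficient of $x^{n-l}y^{m-k+l}$ in
\[
P(x,y)=\frac{(x+y)^{m+n-2k}}{(1-x)^{m-k+1}(1+y)^{n-k+1}},
\]
and I want to show that this coefficient equals $(-1)^{l}\binom{m+n-2k}{m-k}$.

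The key algebraic input is the partial-fraction identity
\[
\frac{x+y}{(1-x)(1+y)}=\frac{x}{1-x}+\frac{y}{1+y},
\]
verified at once by clearing denominators: $x+y=x(1+y)+y(1-x)$. Raising both sides to the $(m+n-2k)$th power and applying the binomial theorem gives
\[
(x+y)^{m+n-2k}=\sum_{a+b=m+n-2k}\binom{m+n-2k}{a}\,x^a y^b (1-x)^b (1+y)^a,
\]
and dividing by $(1-x)^{m-k+1}(1+y)^{n-k+1}$ yields
\[
P(x,y)=\sum_{a+b=m+n-2k}\binom{m+n-2k}{a}\,x^a y^b\,(1-x)^{n-k-1-a}(1+y)^{a-n+k-1}.
\]

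What remains is to extract $[x^{n-l}y^{m-k+l}]$ summand by summand. A short case analysis, splitting according to whether the exponents $n-k-1-a$ and $a-n+k-1$ are nonnegative (polynomial factor) or negative (power series factor), shows that every term with $a\ne n-k$ vanishes: when $a\le n-k-1$, the required power $n-l-a$ of $x$ strictly exceeds the polynomial degree $n-k-1-a$ (using $l\le k$); when $a\ge n-k+1$, the required power $k-n+l+a$ of $y$ strictly exceeds the polynomial degree $a-n+k-1$ (using $l\ge 0$). The only surviving index $a=n-k$ involves $(1-x)^{-1}$ and $(1+y)^{-1}$, which contribute $[x^{k-l}](1-x)^{-1}=1$ and $[y^{l}](1+y)^{-1}=(-1)^{l}$, so the total is $\binom{m+n-2k}{n-k}\cdot 1\cdot(-1)^{l}=(-1)^{l}\binom{m+n-2k}{m-k}$. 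The main care is precisely this bookkeeping with generalized binomial coefficients when one of the exponents is negative, but the partial-fraction identity is designed so that exactly one value of $a$ survives.
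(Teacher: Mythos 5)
Your proof is correct, and it takes a genuinely different route from the paper's. The paper argues in two steps: first it checks that the alternating vector $(1,-1,\dots,(-1)^k)$ satisfies the lowest-weight condition (mirroring the computation for Proposition 4.1, now using the action of $f$ on ${\bf B}_{m+n-k}$), and then it pins down the overall scalar by evaluating the single coordinate $c_{m,n,k}(m,n-k)$ via the triple-product sum of Proposition 4.7, which collapses using the Chu--Vandermonde variant (4.11) with $r=k$, $s=n-k$. You instead compute all $k+1$ coordinates in one stroke from the generating function of Definition 4.6, and the engine is the partial-fraction identity
\begin{equation*}
\frac{x+y}{(1-x)(1+y)}=\frac{x}{1-x}+\frac{y}{1+y},
\end{equation*}
which, after raising to the power $m+n-2k$, forces every term except $a=n-k$ to die by a degree count (I checked the two inequalities: $n-l-a>n-k-1-a$ needs $l\le k$, and $l+k-n+a>a-n+k-1$ needs $l\ge 0$, both of which hold). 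The paper's route is economical in that identity (4.11) is machinery it wants anyway and only one entry need be computed; your route is more self-contained, since it bypasses both the Chu--Vandermonde identity and the separate verification of the sign pattern, delivering every entry by a single uniform extraction of coefficients. The only dependency you should make explicit is Definition 4.6 itself, i.e., that the generating function genuinely encodes the coordinates of $f^{p-k}\phi_{m,n,k}$ --- but the paper's own proof leans on the same foundation through Proposition 4.7, so this is not a gap relative to the paper.
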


\begin{proof} That $(1, -1, 1, \dots, (-1)^k)$ is a lowest weight vector follows in a manner similar to the highest weight vector formula, only now using the effect of $f$ on ${\bf B_{m+n-k}}.$ To find the scalar, the last entry is found using the coordinate vector formula with $i =m$ and $j=n-k.$ This gives

\begin{align*}c_{m,n,k}(m,n-k) &= \sum_{l=0}^{k} (-1)^l  \begin{pmatrix}m+n-2k \\ m-l \end{pmatrix}\begin{pmatrix}m-l \\ k-l \end{pmatrix}\begin{pmatrix}n-k+l \\ l \end{pmatrix}\\
&= \begin{pmatrix}m+n-2k \\ m-k \end{pmatrix} \sum_{l=0}^{k} (-1)^l  \begin{pmatrix}k\\ l \end{pmatrix}\begin{pmatrix}n-k+l \\ k \end{pmatrix}
\end{align*}
The proposition then  follows from (4.11) with $r=k$ and $s=n-k.$
\end{proof}

\begin{theorem}[Clebsch-Gordan Coefficients]
\begin{equation} C_{m, n, k}(i, j) = \frac{(-1)^k}{D(m,\, n,\, k)}\ \ c_{m, n, k}(m-i, n-j)\end{equation}
where 
\begin{align*} D(m,\, n,\, k) &=  \begin{pmatrix}m+n-k+1 \\ k \end{pmatrix} \ \begin{pmatrix}m+n-2k \\ m-k \end{pmatrix}\\ 
&= \frac{m+n-k+1}{m+n-2k+1} \begin{pmatrix}m+n-k \\ m-k,\ n-k,\ k \end{pmatrix}
\end{align*}
\end{theorem}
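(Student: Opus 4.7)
The plan is to exploit the invariant bilinear form on $V(m)\otimes V(n)$ from Section 2, together with the explicit highest and lowest weight vectors of $V(m+n-2k)$ already given by Propositions 4.1 and 4.9. The first step is to compute the single pairing
\[ \langle \phi_{m,n,k},\ f^{m+n-2k}\phi_{m,n,k}\rangle \]
inside $V(m)\otimes V(n)$. Substituting the two coordinate vectors and applying the rule $\langle f^a\phi_m\otimes f^b\phi_n,\ f^{m-a}\phi_m\otimes f^{n-b}\phi_n\rangle = (-1)^{a+b}$ from (2.9) to each surviving term, the accumulated sign is $(-1)^l \cdot (-1)^{k-l}\cdot(-1)^k = 1$ and the pairing collapses to
\[ \begin{pmatrix} m+n-2k \\ m-k \end{pmatrix} \sum_{l=0}^k \begin{pmatrix} m-l \\ k-l \end{pmatrix}\begin{pmatrix} n-k+l \\ l \end{pmatrix}. \]
By (4.6), this sum is the coefficient of $x^k$ in $\frac{1}{(1-x)^{m-k+1}} \cdot \frac{1}{(1-x)^{n-k+1}} = \frac{1}{(1-x)^{m+n-2k+2}}$, which equals $\binom{m+n-k+1}{k}$. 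Hence the pairing equals exactly $D(m,n,k)$.

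Second, by the Clebsch-Gordan decomposition (Proposition 3.1) each $V(m+n-2k)$ is an irreducible summand. Uniqueness of an invariant bilinear form on an irreducible (self-dual) representation forces the restriction of the tensor-product form to $V(m+n-2k)$ to be a scalar multiple of its canonical form normalized as in (2.8); step 1 identifies that scalar as $D(m,n,k)$. Pairings between distinct summands $V(m+n-2k)$ and $V(m+n-2k')$ vanish, again by Schur. Consequently, for $(p-k)+(q-k')=m+n-2k$,
\[ \langle f^{p-k}\phi_{m,n,k},\ f^{q-k'}\phi_{m,n,k'}\rangle = \delta_{k,k'}\,(-1)^{p-k}\, D(m,n,k). \]

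Third, pair the defining relation (4.2)
\[ f^{p-k}\phi_{m,n,k} = \sum_{i+j=p} c_{m,n,k}(i,j)\ f^i\phi_m\otimes f^j\phi_n \]
with $f^{m-i_0}\phi_m\otimes f^{n-j_0}\phi_n$ for any $i_0+j_0 = p$. By (2.9) the right-hand side collapses to $(-1)^p\, c_{m,n,k}(i_0,j_0)$. On the left-hand side, expand $f^{m-i_0}\phi_m\otimes f^{n-j_0}\phi_n$ via Definition 4.3 into the various $V(m+n-2k')$; step 2 kills every $k'\neq k$ contribution and evaluates the survivor as $(-1)^{p-k}\, D(m,n,k)\, C_{m,n,k}(m-i_0, n-j_0)$. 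Equating the two sides and relabeling $(i_0,j_0)\mapsto(m-i, n-j)$ yields the asserted identity, and the alternate form of $D(m,n,k)$ is a direct factorial rearrangement.

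The main obstacle is the Vandermonde-style identity $\sum_{l}\binom{m-l}{k-l}\binom{n-k+l}{l} = \binom{m+n-k+1}{k}$ in step 1; the generating-function argument handles it cleanly. Everything else is bookkeeping with Schur's lemma and the explicit basis pairing on $V(m)\otimes V(n)$.
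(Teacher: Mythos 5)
Your proposal is correct and follows essentially the same route as the paper: both arguments rest on the invariant bilinear form, the pairing $\langle f^i\phi_m\otimes f^j\phi_n, f^{m-i}\phi_m\otimes f^{n-j}\phi_n\rangle=(-1)^{i+j}$, the observation that the restriction of the form to the irreducible summand $V(m+n-2k)$ is a single scalar $D(m,n,k)$ times the normalized form, and the evaluation of $D(m,n,k)$ by dotting the highest weight coordinates of Proposition 4.1 against the reversed lowest weight coordinates of Proposition 4.9, with the resulting sum $\sum_l\binom{m-l}{k-l}\binom{n-k+l}{l}=\binom{m+n-k+1}{k}$ handled as a convolution of binomial series. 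The only difference is cosmetic ordering: you compute $D$ first and then derive the $c$--$C$ relation by pairing, whereas the paper derives the relation first and extracts $D$ from the orthogonality relation (4.5) at $p=k$.
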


\begin{proof} The invariant bilinear form on $V(m)\otimes V(n)$ gives
\begin{equation*}\langle f^i\phi_m\otimes f^j\phi_n, f^{m-i}\phi_m\otimes f^{n-j}\phi_n\rangle = (-1)^{i+j},\end{equation*}
and
\begin{equation*}\langle f^i\phi_{m,n,k}, f^{m+n-2k-i}\phi_{m,n,k}\rangle = (-1)^{i} D(m,n,k),\end{equation*}
where $D(m,n,k)$ depends only on $m, n,$ and $k.$  To compute $D(m,n,k)$, first note
\begin{align*} (-1)^{i+j} c_{m,n,k}(i,j) &= \langle f^i\phi_m\otimes f^j\phi_n, f^{m+n-i-j-k}\phi_{m,n,k}\rangle \\ &= (-1)^{i+j-k} D(m,n,k)\ C_{m,n,k}(m-i, n-j) \end{align*}

Substituting into (4.5) gives
\begin{equation}\sum\limits_{i+j=p}\ c_{m,n,k}(i, j)\ c_{m,n,k}(m-i, n-j) = (-1)^k\ D(m,n,k)\end{equation}
Since this sum is independent of $p$, choose $p=k.$ That is, one dots the coordinate vector of the highest weight vector with the reverse of the lowest weight vector.
Using Proposition 4.9, the dot product has the form
\begin{equation*}(-1)^k\ \begin{pmatrix}m+n-2k \\ m-k \end{pmatrix}\sum_{l=0}^k\begin{pmatrix}m-l \\ k-l \end{pmatrix}\ \begin{pmatrix}n-k+l \\ l \end{pmatrix}.\end{equation*} The terms in the sum are the coordinates of the highest weight vector without signs.  In turn, the sum occurs as a term in the  convolution of binomial series; one obtains 
\begin{equation*}\sum_{r+s=k}\begin{pmatrix}m-r \\ m-k \end{pmatrix}\ \begin{pmatrix}n-s \\ n-k \end{pmatrix} =    \begin{pmatrix}m+n-k+1 \\ k \end{pmatrix}.\end{equation*}

\end{proof}

Finally, substituting into Proposition 4.4 gives

\begin{corollary}[Orthogonality Relations 2]
\begin{equation}\sum\limits_{i+j=p}\ c_{m,n,k}(i, j)\ c_{m,n,k'}(m-i, n-j) = (-1)^{k}\ \delta_{k, k'}\ D(m, n, k)\end{equation}
\begin{equation}\sum\limits_{k=0}^m\ (-1)^k\ C_{m,n,k}(m-i, n-j)\ C_{m, n, k}(i', j')\ D(m,n,k) =\ \delta_{i,i'}\delta_{j, j'}\end{equation}
where $i+j=i'+j'.$

\end{corollary}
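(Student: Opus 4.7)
The plan is to derive both identities by straightforward substitution of Theorem 4.10 into the two orthogonality relations of Proposition 4.4. Theorem 4.10 supplies the dictionary $C_{m,n,k}(i,j) = \frac{(-1)^k}{D(m,n,k)}\,c_{m,n,k}(m-i, n-j)$, which converts every Clebsch-Gordan coefficient into a coordinate function (with index inversion $(i,j)\mapsto(m-i,n-j)$ and the scalar $(-1)^k/D(m,n,k)$).

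For (4.14), I start from (4.5) and replace the factor $C_{m,n,k'}(i,j)$ by $\frac{(-1)^{k'}}{D(m,n,k')}\,c_{m,n,k'}(m-i, n-j)$. Pulling the $k'$-dependent scalar outside the sum over $i+j=p$ and multiplying both sides by $(-1)^{k'} D(m,n,k')$ turns the left side into the desired bilinear pairing $\sum_{i+j=p} c_{m,n,k}(i,j)\, c_{m,n,k'}(m-i,n-j)$, while the right side becomes $(-1)^{k'} D(m,n,k')\,\delta_{k,k'}$. Since the Kronecker delta forces $k'=k$ wherever the expression is nonzero, the prefactor may be rewritten as $(-1)^k D(m,n,k)$, yielding (4.14).

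For (4.15), I invert the roles. Substituting $(i,j)\mapsto(m-i,n-j)$ in Theorem 4.10 gives the equivalent identity $c_{m,n,k}(i,j) = (-1)^k D(m,n,k)\,C_{m,n,k}(m-i, n-j)$. Plugging this into (4.6) in place of $c_{m,n,k}(i,j)$ immediately produces $\sum_{k=0}^{m} (-1)^k D(m,n,k)\,C_{m,n,k}(m-i,n-j)\,C_{m,n,k}(i',j')$ on the left and $\delta_{i,i'}\delta_{j,j'}$ on the right, which is exactly (4.15).

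There is no real obstacle here; the corollary is essentially a bookkeeping exercise, which is why the paper labels it a corollary of Theorem 4.10 and Proposition 4.4. The only points requiring care are the correct placement of the sign $(-1)^k$ and the factor $D(m,n,k)$ when solving Theorem 4.10 for $c_{m,n,k}$ in terms of $C_{m,n,k}$, and the observation that the constraint $i+j=i'+j'=p$ in (4.15) is inherited from (4.6) and is compatible with the index inversion $(i,j)\mapsto(m-i,n-j)$ used in the substitution.
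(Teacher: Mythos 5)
Your proposal is correct and matches the paper's own (one-line) proof, which is precisely the substitution of Theorem 4.10 into the two relations of Proposition 4.4. The sign and normalization bookkeeping, including rewriting $(-1)^{k'}D(m,n,k')$ as $(-1)^{k}D(m,n,k)$ under the Kronecker delta, is handled correctly.
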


\section{Example}

The matrices in Example 5.1 follow from the methods of Section 4. The matrices on the left-hand side represent coordinates for weight vectors. Weights are in descending order, decreasing by 2 at each column. The highest weight vector is represented by the leftmost column (Proposition 4.1), and one moves to the right using Pascal's recurrence (uppercase L pattern). When $k=0,$ one obtains part of Pascal's Triangle.

The matrices on the right-hand side represent Clebsch-Gordan coefficients.  Each column gives coordinates for $f^i\phi_m\otimes f^j\phi_n$ in terms of the corresponding weight vectors for each $V(m')$-type.

Consider the third columns for the  $V(5)$-component in Example 5.1.  One uses the basis
\begin{equation}\{\phi_3\otimes f^3\phi_4,\ \ \ f\phi_3\otimes f^2\phi_4,\ \ \ f^2\phi_3\otimes f^1\phi_4,\ \ \ f^3\phi_3\otimes \phi_4\},\end{equation}
since the weight is $5-2(2)=1$  The corresponding coordinate vector is $ (3 , 2, -5, -4),$ and 
\begin{equation*}v=f^2\phi_{3,4,1} = 3\ \phi_3\otimes f^3\phi_4\ +\ 2\ f\phi_3\otimes f^2\phi_4\ - 5\ f^2\phi_3\otimes f\phi_4\ -4\ f^3\phi_3\otimes \phi_4.\end{equation*}

The Clebsch-Gordan coefficients for weight $1$ are $\frac{-1}{70}(-9,-3, 5, 3)$, and
\begin{equation*}P_{V(5)}(\phi_3\otimes f^3\phi_4)=\frac{9}{70}\ v,\quad P_{V(5)}(f\phi_3\otimes f^2\phi_4)=\frac{3}{70}\ v, \quad \text{etc.,}\end{equation*}
where $P_{V(5)}$ records the component in the $V(5)$-type.

\begin{example}
\begin{equation*} k=0:\  V(7) \subset V(3)\otimes V(4)\end{equation*}
\begin{equation*}\begin{bmatrix}
1 & 1 & 1 & 1 & 1 & 0 & 0 & 0\\
0 & 1 & 2 & 3 & 4 & 5 & 0 & 0 \\
0 & 0 & 1 & 3 & 6 & 10 & 15 & 0\\
0 & 0 & 0 & 1 & 4 & 10 & 20 & 35
\end{bmatrix};\qquad\frac{1}{35}\begin{bmatrix}
 35 & 20 & 10 & 4 & 1 & 0 & 0 & 0\\
 0 & 15 & 10 & 6 & 3 & 1 & 0 & 0\\
 0 & 0 & 5 & 4 & 3 & 2 & 1 & 0\\
 0 & 0 & 0 & 1 & 1 & 1 & 1 & 1
\end{bmatrix}
\end{equation*}

\begin{equation*}k=1:\ V(5) \subset V(3)\otimes V(4)\end{equation*}
\begin{equation*}\begin{bmatrix}
\ \ 3 & \ \ 3 & \ \ 3 & \ \ 3 & \ \ \ \ 0 & \ \ \ 0\\
-4 & -1 & \ \ 2 &\ \ 5 &\ \ \ \ 8 &\ \  \ 0\\
\ \ 0 & -4 & -5 & -3 &\ \  \ \ 2 &\ \ 10\\
\ \ 0 & \ \ 0 & -4 & -9 & -12 & -10
\end{bmatrix};\quad\frac{-1}{\ 70}\begin{bmatrix}
-10 & -12 & -9 & -4 & \ \ 0 & \ \ 0\\
 \ \ 10 & \ \ 2 & -3 & -5 & -4 & \ \ 0\\
 \ \ 0 & \ \ 8 & \ \ 5 & \ \ 2 & -1 & -4\\
\ \ 0 & \ \ 0 & \ \ 3 &\ \  3 &\ \  3 &\ \  3
\end{bmatrix}
\end{equation*}

\begin{equation*}k=2:\ V(3) \subset V(3)\otimes V(4)\end{equation*}
\begin{equation*}\begin{bmatrix}
\ \ 3 &\ \  3 &\ \  3 & \ \ 0\\
-6 & -3 & \ \ 0 &\ \ 3 \\
\ \ 6 & \ \ 0 & -3 & -3 \\
\ \ 0 & \ \  6 &\ \  6 & \ \ 3
\end{bmatrix};\qquad\frac{1}{45}\begin{bmatrix}
  \ \ 3 &\ \  6 &\ \  6  & \ \  0\\
 -3 & -3 & \ \ 0 &\ \  6\\
 \ \ 3 & \ \ 0 & -3 & -6\\
 \ \ 0 & \ \ 3 & \ \ 3 & \ \ 3 
\end{bmatrix}
\end{equation*}

\begin{equation*}k=3:\ V(1) \subset V(3)\otimes V(4)\end{equation*}
\begin{equation*}\begin{bmatrix}
\ \ 1 & \ \ 1\\
-2 & -1\\
\ \ 3 & \ \ 1 \\
-4 & -1
\end{bmatrix};\qquad\frac{-1}{\ 10}\begin{bmatrix}
-1 & -4\\
\ \ 1 &\ \  3\\
-1 & -2\\
 \ \ 1 & \ \ 1 
\end{bmatrix}\end{equation*}
\end{example}

\section{Algorithm with Pascal's recurrence}

Section 5 provides a template for computer output as a polytope of values, with each horizontal slice representing an irreducible constituent. In terms of modeling, the methods of Section 4 enable a holistic alternative to the traditional approach using summation formulas and give a recursive method using only integer types.  For sophisticated models that compute single summations with large integers, see, for instance, \cite{JF}.  

\begin{theorem}[Algorithm with Pascal's recurrence]  To calculate the coordinate vector matrix for $V(m+n-2k)$ in $V(m)\otimes V(n)$:

\begin{enumerate}
\item initialize an $(m+1)$-by-$(m+n-2k+1)$ matrix,
\item set up coordinates for the highest weight vector in the leftmost column using Proposition 4.1, and extend the top row value,
\item apply Pascal's recurrence rightwards in an uppercase L pattern, extending by zero where necessary,
\item for the zero entries in lower-left corner, corresponding entries in the upper-right corner are set  to zero, and
\item the $(i+1, i+j-k+1)$-th entry is $c_{m,n,k}(i, j).$
\end{enumerate}

To calculate the Clebsch-Gordan coefficients associated to the corresponding basis vectors:

\begin{enumerate}
\item rotate the coordinate vector matrix by $180$ degrees,
\item divide by the normalizing factor $(-1)^k D(m, n, k)$ in (4.12), and
\item the $(i+1, i+j-k+1)$-th entry is $C_{m,n,k}(i, j).$
\end{enumerate}
\end{theorem}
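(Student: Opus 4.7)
The plan is to track how the algorithm unfolds under the action of $f$ on $\phi_{m,n,k}$, matching each step to results already established in Section 4. First, fix notation by identifying the $c$-th column of the matrix with the coordinate vector (in basis $\mathbf{B}_{k+c-1}$) of $f^{c-1}\phi_{m,n,k}$; equivalently, the entry at position $(i+1, c)$ is $c_{m,n,k}(i, j)$ with $j = k+c-1-i$, so that $c = i+j-k+1$ as required by item (5).

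With this identification, column $1$ stores $\phi_{m,n,k}$ itself, and Proposition 4.1 both populates it and explains the lower-left zeros: the support of the highest weight vector is limited to rows $1$ through $k+1$. For the recurrence step, moving from column $c$ to column $c+1$ amounts to applying $f$ to $f^{c-1}\phi_{m,n,k}$; by the Leibniz rule (4.8), one has $f\cdot (f^{i}\phi_m\otimes f^{j}\phi_n) = f^{i+1}\phi_m\otimes f^{j}\phi_n + f^{i}\phi_m\otimes f^{j+1}\phi_n$. Expanding and collecting coefficients in $\mathbf{B}_{k+c}$ yields $c_{m,n,k}(i',j') = c_{m,n,k}(i'-1,j') + c_{m,n,k}(i',j'-1)$; in matrix terms, this is precisely the uppercase-L pattern of step (3): each new entry equals its left neighbor plus its upper-left neighbor. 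The top-row extension in step (2) is the boundary instance of this rule, where the missing upper-left neighbor contributes zero and the formula reduces to simple copying.

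The main obstacle is the upper-right boundary. Pascal propagation from column $1$ alone does not detect that $f^{j}\phi_n = 0$ for $j > n$, so the top rows of far-right columns would inherit incorrect nonzero values from the horizontal propagation. Step (4) remedies this via Proposition 4.9: the lowest weight vector $f^{m+n-2k}\phi_{m,n,k}$ has support only in rows $m-k+1$ through $m+1$ of the rightmost column, which mirrors the support $\{0,\dots,k\}$ of the highest weight vector under $180$-degree rotation of the $(m+1)\times(m+n-2k+1)$ grid. Setting the upper-right zero pattern by hand imposes the boundary condition $c_{m,n,k}(i,j)=0$ when $j > n$, after which the L-pattern fills the interior correctly.

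For the Clebsch-Gordan block, invoke Theorem 4.10: $C_{m,n,k}(i,j) = (-1)^k D(m,n,k)^{-1}\,c_{m,n,k}(m-i,n-j)$. The entry $c_{m,n,k}(m-i,n-j)$ sits at position $(m-i+1,\,m+n-i-j-k+1)$, and the $180$-degree rotation of an $(m+1)\times(m+n-2k+1)$ matrix sends this position to $(i+1,\,i+j-k+1)$, exactly where $C_{m,n,k}(i,j)$ is supposed to appear. Dividing the rotated matrix by $(-1)^k D(m,n,k)$ then produces $C_{m,n,k}(i,j)$ since $(-1)^{-k}=(-1)^k$. Beyond the index bookkeeping, the proof is a direct translation of Definition 4.5, equation (4.8), and Propositions 4.1, 4.9, and Theorem 4.10 into matrix operations.
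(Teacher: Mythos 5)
Your proposal is correct and follows the same route the paper intends: the leftmost column comes from Proposition 4.1, the uppercase-L Pascal step from the Leibniz rule (4.8) (equivalently Proposition 7.1), the upper-right zeroing from the vanishing of $f^j\phi_n$ for $j>n$ (consistent with the lowest weight vector support in Proposition 4.9), and the rotation-plus-normalization from Theorem 4.10. Your index bookkeeping for the $180$-degree rotation and for the identification of column $c$ with $[f^{c-1}\phi_{m,n,k}]_{\mathbf{B}_{k+c-1}}$ checks out, so nothing further is needed.
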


As a check on the algorithm, the dot product of vectors in corresponding columns from these matrices always equals one, as is easily verified in Example 5.1.

\section{Recurrence Relations}

Several families of recurrences and symmetries apply to $C_{m,n,k}(i, j)$.  These are often expressed as consequences of special function theory, in particular, hypergeometric series of type ${}_3F_2.$  In the present setting, these may be recaptured through representation theory and binomial coefficient properties, in particular though generating functions or the summation formula.  For the remainder, results are stated for $c_{m,n,k}(i, j)$, noting that those for $C_{m,n,k}(i, j)$ are simply recaptured using Corollary 8.5. 

The first two recurrences follow by applying $f$ or $e$ to basis vectors. Compare with \cite{Vi}, Ch. III., Sec. 8.7, (4) and (5).  Alternatively,  Proposition 7.2 follows from applying (8.5) to (7.1).

\begin{proposition}[Pascal's Recurrence]

\begin{equation}c_{m, n, k}(i, j) = c_{m, n, k}(i, j-1) + c_{m, n, k}(i-1, j)\end{equation}
\end{proposition}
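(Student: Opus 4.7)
The plan is to apply $f$ to the weight vector one level up, namely $f^{p-1-k}\phi_{m,n,k}$ with $p=i+j$, and match coefficients on both sides. This follows the paper's announced strategy of deriving the first two recurrences from applying $f$ or $e$ to basis vectors, and makes direct use of Definition 4.5 together with the Leibniz rule (LA.3 on the tensor product, recorded in (4.8)).

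First I would write, using Definition 4.5 with $p$ replaced by $p-1=i+j-1$,
\begin{equation*}
f^{p-1-k}\phi_{m,n,k} \;=\; \sum_{i'+j'=p-1} c_{m,n,k}(i',j')\; f^{i'}\phi_m \otimes f^{j'}\phi_n.
\end{equation*}
Apply $f$ to both sides. On the left, $f$ simply increases the exponent by one, producing $f^{p-k}\phi_{m,n,k}$. On the right, using (4.8) term-by-term,
\begin{equation*}
f\cdot(f^{i'}\phi_m\otimes f^{j'}\phi_n) \;=\; f^{i'+1}\phi_m\otimes f^{j'}\phi_n \;+\; f^{i'}\phi_m\otimes f^{j'+1}\phi_n.
\end{equation*}

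Next I would reindex. In the first family of summands set $i=i'+1$ and $j=j'$; in the second set $i=i'$ and $j=j'+1$. In both cases $i+j=p$, and the coefficient of the basis vector $f^i\phi_m\otimes f^j\phi_n$ on the right-hand side becomes exactly $c_{m,n,k}(i-1,j)+c_{m,n,k}(i,j-1)$. Comparing this to the expansion of $f^{p-k}\phi_{m,n,k}$ via Definition 4.5 at level $p$, whose corresponding coefficient is by definition $c_{m,n,k}(i,j)$, yields the identity.

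I do not anticipate a serious obstacle. The one thing worth checking is boundary behavior when $i=0$ or $j=0$: the convention that $c_{m,n,k}(i',j')$ vanishes when $i'<0$ or $j'<0$ (consistent with the convention of setting binomial coefficients with negative entries to zero, cf.\ Proposition 4.8) ensures that the missing summand on the right-hand side is correctly absent. The linear independence of the basis $\mathbf{B}_p$ of weight-$(m+n-2p)$ vectors, used when equating coefficients, is immediate from (3.7).
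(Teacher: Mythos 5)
Your proof is correct and follows exactly the route the paper indicates: Proposition 7.1 is stated there to ``follow by applying $f$ to basis vectors,'' and you have simply written out that computation in full, applying $f$ to $f^{p-1-k}\phi_{m,n,k}$, using (4.8), reindexing, and comparing coefficients in the basis ${\bf B}_p$. Your attention to the boundary convention $c_{m,n,k}(i',j')=0$ for negative indices is also consistent with the paper's stated conventions, so nothing is missing.
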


\begin{proposition}[Reverse Recurrence]
\begin{multline}\notag (i'+1)(m+n-2k-i')\ c_{m, n, k}(i,j)=\\ (i+1)(m-i)\ c_{m, n, k}(i+1, j) + (j+1)(n-j)\ c_{m, n, k}(i, j+1)\end{multline}
where $i'=i+j-k$.
\end{proposition}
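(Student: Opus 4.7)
The plan is to obtain the recurrence by applying the raising operator $e$ to the weight vector $f^{i'+1}\phi_{m,n,k}$ in two different ways, and then comparing coefficients. This is the natural companion to Proposition 7.1 (which came from applying $f$).

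First I would write the weight vector at level $p+1 = i+j+1$ in terms of the standard basis,
\begin{equation*}
f^{i'+1}\phi_{m,n,k} \;=\; \sum_{a+b=p+1} c_{m,n,k}(a,b)\, f^a\phi_m \otimes f^b\phi_n,
\end{equation*}
where $i' = p-k = i+j-k$. On the one hand, since $f^{i'+1}\phi_{m,n,k}$ lies in the irreducible subrepresentation $V(m+n-2k)$ with highest weight vector $\phi_{m,n,k}$, relation (LA.2) applied inside this summand gives
\begin{equation*}
e\cdot f^{i'+1}\phi_{m,n,k} \;=\; (i'+1)\bigl(m+n-2k-i'\bigr)\, f^{i'}\phi_{m,n,k},
\end{equation*}
whose coefficient at $f^i\phi_m\otimes f^j\phi_n$ (with $i+j=p$) is $(i'+1)(m+n-2k-i')\,c_{m,n,k}(i,j)$.

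On the other hand, using the tensor-product action (3.3) together with (LA.2) on each factor,
\begin{equation*}
e\cdot (f^a\phi_m \otimes f^b\phi_n) \;=\; a(m-a+1)\,f^{a-1}\phi_m\otimes f^b\phi_n \;+\; b(n-b+1)\,f^a\phi_m\otimes f^{b-1}\phi_n.
\end{equation*}
Distributing over the expansion of $f^{i'+1}\phi_{m,n,k}$, the coefficient of $f^i\phi_m\otimes f^j\phi_n$ receives contributions only from $(a,b) = (i+1,j)$ in the first term and $(a,b)=(i,j+1)$ in the second, giving
\begin{equation*}
(i+1)(m-i)\, c_{m,n,k}(i+1,j) \;+\; (j+1)(n-j)\, c_{m,n,k}(i,j+1).
\end{equation*}

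Equating the two expressions for the coefficient yields the stated identity. The only real obstacle is bookkeeping: keeping the level $p$ versus $p+1$ straight and confirming that $i' = p-k$ aligns so that $(i'+1)(m+n-2k-i')$ appears rather than $i'(m+n-2k-i'+1)$. No further binomial identities are needed; the result follows purely from the $sl(2)$-module structure already encoded in Definition 4.5 and (LA.2). The alternative route mentioned in the excerpt — deducing Proposition 7.2 from Proposition 7.1 via the symmetry (8.5) — would require the Regge-type identity, so the direct representation-theoretic argument above is both shorter and independent of Section 8.
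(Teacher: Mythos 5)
Your proof is correct and follows exactly the route the paper indicates: the paper's entire justification for Proposition 7.2 is the remark that it ``follows by applying $e$ to basis vectors,'' which is precisely your computation of $e\cdot f^{i'+1}\phi_{m,n,k}$ once via (LA.2) inside the summand $V(m+n-2k)$ and once via the tensor-product action, then matching the coefficient of $f^i\phi_m\otimes f^j\phi_n$. The bookkeeping of $p$ versus $p+1$ and the factor $(i'+1)(m+n-2k-i')$ checks out.
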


The next set of recurrences follow from the generating function for $c_{m,n,k}(i, j)$ and Proposition 7.1.  
\begin{proposition}[Outer Recurrences]
\begin{equation}c_{m, n, k}(i, j) = c_{m-1, n, k}(i, j)  + \ c_{m-1, n-1, k-1}(i, j-1)\end{equation}
\begin{equation}c_{m, n, k}(i, j) = c_{m, n-1, k}(i, j) -  \ c_{m-1, n-1, k-1}(i-1, j)\end{equation}
\begin{equation}c_{m, n, k}(i, j) = \ c_{m-1, n, k}(i-1, j) + \ c_{m, n-1, k}(i, j-1)\end{equation}
\begin{equation}c_{m, n, k}(i, j) = \ c_{m+1, n, k+1}(i, j) - \ c_{m, n+1, k+1}(i, j).\end{equation}
\end{proposition}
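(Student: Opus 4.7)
The plan is to prove each of the four recurrences directly from the generating function in Definition 4.6, by realizing each parameter shift on the right-hand side as an elementary algebraic operation on rational functions and then comparing coefficients of $x^j y^i$. Throughout, set $p = i+j$ and write
$$G_{m,n,k,p}(x,y) = \frac{(x+y)^{p-k}}{(1-x)^{m-k+1}(1+y)^{n-k+1}},$$
so that Definition 4.6 gives $c_{m,n,k}(i,j) = [x^j y^i]\, G_{m,n,k,p}$.

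The main observation is the following dictionary relating shifted generating functions to $G_{m,n,k,p}$: the shift $m \to m-1$ multiplies $G$ by $(1-x)$; the shift $n \to n-1$ multiplies by $(1+y)$; the simultaneous shift $(m,n,k)\to(m-1,n-1,k-1)$ together with $p \to p-1$ (which is forced whenever $i$ or $j$ decreases by $1$) leaves $G$ unchanged. Similarly, $(m,k)\to(m+1,k+1)$ multiplies $G$ by $(1+y)/(x+y)$, and $(n,k)\to(n+1,k+1)$ multiplies by $(1-x)/(x+y)$. Finally, decreasing the upper index on $x$ or $y$ in the coefficient extraction is achieved by multiplying the generating function by $x$ or $y$, respectively.

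With this dictionary, each recurrence reduces to a trivial polynomial identity. For (7.2), the right-hand side becomes $[x^j y^i]\,\bigl((1-x)+x\bigr) G_{m,n,k,p}$, giving $(1-x)+x = 1$. For (7.3), one uses $(1+y)-y = 1$ in the same way. For (7.5), both terms share a common factor $1/(x+y)$ after being expressed via $G_{m,n,k,p}$, and the right-hand side becomes
$$[x^j y^i]\, \frac{(1+y) - (1-x)}{x+y}\, G_{m,n,k,p};$$
since $(1+y)-(1-x) = x+y$, the factor cancels and one recovers $c_{m,n,k}(i,j)$. Recurrence (7.4) can be handled the same way via $y(1-x)+x(1+y) = x+y$; alternatively, as the paper's remark suggests, it follows by combining (7.2) and (7.3) with Pascal's recurrence (Proposition 7.1): write $c_{m,n,k}(i,j) = c_{m,n,k}(i-1,j) + c_{m,n,k}(i,j-1)$, substitute (7.2) into the first term and (7.3) into the second, and observe that the two resulting copies of $c_{m-1,n-1,k-1}(i-1,j-1)$ appear with opposite signs and cancel.

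There is no serious obstacle; the bookkeeping of which factor $(1-x)$, $(1+y)$, or $(x+y)$ attaches to each shifted generating function is the only place requiring care. The conceptual reason the identities are elementary is that $p$ enters $G_{m,n,k,p}$ only through the exponent of $(x+y)$, so shifts in $i$ or $j$ that change $p$ are absorbed neatly, and the verification reduces in each case to a one-line polynomial identity in $\mathbb{Q}(x,y)$.
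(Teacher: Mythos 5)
Your proposal is correct and follows essentially the same route as the paper: realize the shifts $m\to m-1$ and $n\to n-1$ as multiplication of the generating function of Definition 4.6 by $(1-x)$ and $(1+y)$, and use the key fact that $(m,n,k,p)\to(m-1,n-1,k-1,p-1)$ leaves the generating function unchanged. The only (harmless) difference is that you verify (7.4) and (7.5) directly by the one-line identities $y(1-x)+x(1+y)=x+y$ and $(1+y)-(1-x)=x+y$, whereas the paper deduces them by combining the first two recurrences with Pascal's recurrence (Proposition 7.1) --- a variation you also note.
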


\begin{proof}   (7.2) follows by noting that
\begin{equation*}(1-x)\cdot \frac{(x+y)^{i+j-k}}{(1-x)^{m-k+1}(1+y)^{n-k+1}} = \frac{(x+y)^{i+j-k}}{(1-x)^{m-k}(1+y)^{n-k+1}}.\end{equation*}
(7.3) follows in a similar manner, instead using the factor $(1+y)$.

For (7.4), subtract 1 from $i$ and $j$ in (7.1) and (7.2), respectively.  Adding both equations gives
\begin{equation*} c_{m, n, k}(i-1, j) + c_{m, n, k}(i, j-1) = c_{m-1, n, k}(i-1, j) + c_{m, n-1, k}(i, j-1),\end{equation*}
and the result follows from Proposition 7.1.

The proof of (7.5) follows in a similar matter with subtraction after increasing $m$, $n$ and $k$ by 1.
\end{proof}

Alternatively, Proposition 7.1 and (7.2) follow directly from Proposition 4.7 and Pascal's identity
\begin{equation}\begin{pmatrix} r \\ s\end{pmatrix} = \begin{pmatrix} r-1 \\ s\end{pmatrix}  +  \begin{pmatrix} r-1 \\ s-1\end{pmatrix}.\end{equation}

\section{Regge Symmetries}

In the 1950s, Regge \cite{Re} added to known symmetries to obtain a group of order 72 associated to $C'_{m,n,k}(i, j)$ in the classical context.  See \cite{Ko}, \cite{Kr}, or \cite{Vi}, Ch. III, Sec. 8.4 for detailed background.  These symmetries are efficiently encoded by transfer of indices in accordance with the following matrix, called the Regge symbol:
\begin{equation}\begin{Vmatrix}\ \  n-k\quad &  m-k & k \\ \ \ i & j &\ \  m+n-i-j-k\ \\ \ \ m-i \quad & n-j & i+j-k \end{Vmatrix}.\end{equation}

Symmetries correspond to the usual symmetries for determinant; one may permute any rows, permute any  columns, or perform matrix transpose.  Thus, to describe all symmetries in this form, it is enough to describe a generating set of row switches and transpose. The column switch corresponding to tensor transpose is also included. Table 1 summarizes index transformations.

With the conventions above, these generating symmetries are given below.  For abbreviations, for instance, R13 indicates the interchange of rows 1 and 3 in the Regge symbol.

\begin{proposition}[C12 Symmetry - Tensor Transpose]
\begin{equation}c_{n,m,k}(j, i) = (-1)^kc_{m, n, k}(i, j)\end{equation}
\end{proposition}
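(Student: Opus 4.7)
The plan is to derive the identity from the tensor transpose map $T : V(m) \otimes V(n) \to V(n) \otimes V(m)$, defined by $T(v \otimes w) = w \otimes v$. Since the Lie algebra acts diagonally on a tensor product, $T$ is $sl(2)$-equivariant; in particular it commutes with $f$ and carries the irreducible summand $V(m+n-2k) \subset V(m) \otimes V(n)$ isomorphically onto the corresponding summand in $V(n) \otimes V(m)$. Consequently $T\phi_{m,n,k}$ sits in the one-dimensional highest-weight space of that summand, so $T\phi_{m,n,k} = \lambda\,\phi_{n,m,k}$ for some scalar $\lambda$ to be determined.

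To find $\lambda$, apply $T$ termwise to the expansion of $\phi_{m,n,k}$ provided by Proposition 4.1, obtaining
\begin{equation*}
T\phi_{m,n,k} = \sum_{l=0}^{k} (-1)^l \binom{m-l}{k-l}\binom{n-k+l}{l}\, f^{k-l}\phi_n \otimes f^{l}\phi_m.
\end{equation*}
Now apply Proposition 4.1 with $m$ and $n$ interchanged, then perform the index substitution $l' = k - l$. The three binomial factors rearrange precisely into $\binom{m-l}{k-l}\binom{n-k+l}{l}$, while the sign becomes $(-1)^{k-l} = (-1)^k (-1)^l$. This yields $\phi_{n,m,k} = (-1)^k\, T\phi_{m,n,k}$, so $\lambda = (-1)^k$.

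Finally, apply $f^{p-k}$ to both sides of $T\phi_{m,n,k} = (-1)^k \phi_{n,m,k}$. Using $T \circ f = f \circ T$ and Definition 4.2, both sides expand over the basis $\{f^j\phi_n \otimes f^i\phi_m : i+j=p\}$:
\begin{equation*}
\sum_{i+j=p} c_{m,n,k}(i,j)\, f^j\phi_n \otimes f^i\phi_m = (-1)^k \sum_{i+j=p} c_{n,m,k}(j,i)\, f^j\phi_n \otimes f^i\phi_m.
\end{equation*}
Matching coefficients gives $c_{n,m,k}(j,i) = (-1)^k c_{m,n,k}(i,j)$, which is (8.2).

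The only delicate point is verifying that the substitution $l \mapsto k - l$ in step two does realign the three binomial factors; once that bookkeeping is in hand, the sign $(-1)^k$ drops out automatically and no further calculation is required. A purely combinatorial alternative, which bypasses $T$ entirely, is to apply the same substitution directly to the summation formula of Proposition 4.7, using $\binom{i+j-k}{i-l} = \binom{i+j-k}{j-k+l}$; I prefer the representation-theoretic route because it makes the origin of the sign transparent and sets a template for subsequent Regge symmetries.
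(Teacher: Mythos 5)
Your proof is correct and is essentially the paper's argument in expanded form: the paper disposes of C12 with the single remark that it ``follows directly from the definition of the coordinate vector for highest weights,'' which is exactly your comparison of $T\phi_{m,n,k}$ with $\phi_{n,m,k}$ via the substitution $l\mapsto k-l$ in Proposition 4.1. Your additional steps (equivariance of the transpose map and propagation to all weight vectors by applying $f$) are just the details the paper leaves implicit.
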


\begin{proposition}[Modified Transpose Symmetry: $Transpose\circ R23$]
\begin{equation}\begin{pmatrix}m-k+j \\ m-k\end{pmatrix} \ c_{m,\, n-k+i,\, i}(k,\ i+j-k) =\begin{pmatrix}m-k+j \\ m-i \end{pmatrix} c_{m, n, k}(i, j)\end{equation}
\end{proposition}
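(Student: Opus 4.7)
The plan is to prove the identity term-by-term by expanding both sides via the summation formula of Proposition 4.7, then matching factorials.

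First I would apply Proposition 4.7 to each side. The right-hand side is directly
\begin{equation*}
c_{m,n,k}(i,j) = \sum_{l=0}^{k} (-1)^l \begin{pmatrix} i+j-k \\ i-l \end{pmatrix}\begin{pmatrix} m-l \\ k-l \end{pmatrix}\begin{pmatrix} n-k+l \\ l \end{pmatrix}.
\end{equation*}
For the left-hand side, substituting the parameters $(m,\, n-k+i,\, i,\, k,\, i+j-k)$ into Proposition 4.7 and simplifying (using that $k + (i+j-k) - i = j$ and $(n-k+i) - i = n-k$) gives
\begin{equation*}
c_{m,\, n-k+i,\, i}(k,\, i+j-k) = \sum_{l=0}^{i} (-1)^l \begin{pmatrix} j \\ k-l \end{pmatrix}\begin{pmatrix} m-l \\ i-l \end{pmatrix}\begin{pmatrix} n-k+l \\ l \end{pmatrix}.
\end{equation*}

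Next I would cancel the common factor $(-1)^l \binom{n-k+l}{l}$ from both sides and verify the remaining binomial products agree term-by-term, i.e., that
\begin{equation*}
\begin{pmatrix} m-k+j \\ m-k \end{pmatrix} \begin{pmatrix} j \\ k-l \end{pmatrix} \begin{pmatrix} m-l \\ i-l \end{pmatrix} = \begin{pmatrix} m-k+j \\ m-i \end{pmatrix} \begin{pmatrix} i+j-k \\ i-l \end{pmatrix} \begin{pmatrix} m-l \\ k-l \end{pmatrix}.
\end{equation*}
This is a routine factorial computation: noting $(i+j-k)! = (j-k+i)!$, both sides reduce to
\begin{equation*}
\frac{(m-k+j)!\,(m-l)!}{(m-k)!\,(m-i)!\,(k-l)!\,(i-l)!\,(j-k+l)!}.
\end{equation*}

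The one subtlety is reconciling the different summation ranges ($l = 0, \ldots, i$ versus $l = 0, \ldots, k$). This is handled by the zero convention for binomial coefficients with negative entries: on the left, $\binom{j}{k-l}$ vanishes when $l > k$, and on the right, $\binom{i+j-k}{i-l}$ vanishes when $l > i$, so both sums effectively reduce to $0 \le l \le \min(i,k)$. Thus the identity follows from the term-by-term match. The ``main obstacle'' here is really just bookkeeping — there is no genuine inductive or representation-theoretic content; the symmetry is visible at the level of the triple product of binomial coefficients in Proposition 4.7, which is itself the algebraic shadow of the Regge symmetry being asserted.
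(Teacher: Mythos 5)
Your proposal is correct and is essentially the paper's own argument: the paper proves this symmetry by ``direct substitution into Proposition 4.7,'' which is exactly the term-by-term factorial verification you carry out (your expansions, the factorial identity, and the handling of the summation ranges via the zero convention all check out). You have simply supplied the details the paper leaves implicit.
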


\begin{proposition}[R13 Symmetry]
\begin{equation}\begin{pmatrix}m+n-k \\ m-k,\ n-k,\ k \end{pmatrix}\ c_{m', n', k'}(i, j) = (-1)^i \begin{pmatrix} m+n-k \\ m-i,\ n-j,\ k' \end{pmatrix} c_{m, n, k}(i, j)\end{equation}
where $m'=n-k+i,\ n'=m-k+j,\ $ and $k'=i+j-k$.
\end{proposition}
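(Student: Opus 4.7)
The plan is to reduce R13 to a termwise identity of binomial coefficients, by computing $c_{m',n',k'}(i,j)$ directly from Definition 4.6 and comparing, index by index in the summation variable $l$, with the sum for $c_{m,n,k}(i,j)$ of Proposition 4.7.

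First I would substitute $m' = n-k+i$, $n' = m-k+j$, $k' = i+j-k$ into the generating function of Definition 4.6. Since $i+j-k' = k$, $m'-k'+1 = n-j+1$, and $n'-k'+1 = m-i+1$, this yields
\[
c_{m',n',k'}(i,j) = [x^j y^i]\,\frac{(x+y)^k}{(1-x)^{n-j+1}(1+y)^{m-i+1}}.
\]
Expanding each of $(x+y)^k$, $(1-x)^{-(n-j+1)}$, and $(1+y)^{-(m-i+1)}$ as a power series, collecting the coefficient of $x^j y^i$, and reindexing $l \mapsto k-l$ produces
\[
c_{m',n',k'}(i,j) = (-1)^i \sum_l (-1)^l \binom{k}{l}\binom{m-l}{m-i}\binom{n-k+l}{n-j}.
\]

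I would then substitute this into the claimed R13 identity, apply Proposition 4.7 on the other side, and cancel the common factor $(-1)^i(m+n-k)!$. The assertion reduces to the term-by-term equality, for each $l$,
\[
\frac{\binom{k}{l}\binom{m-l}{m-i}\binom{n-k+l}{n-j}}{(m-k)!\,(n-k)!\,k!} = \frac{\binom{i+j-k}{i-l}\binom{m-l}{k-l}\binom{n-k+l}{l}}{(m-i)!\,(n-j)!\,(i+j-k)!}.
\]
Expanding each binomial as a ratio of factorials, both sides collapse to the common symmetric expression
\[
\frac{(m-l)!\,(n-k+l)!}{(m-k)!\,(n-k)!\,(m-i)!\,(n-j)!\,l!\,(k-l)!\,(i-l)!\,(j-k+l)!},
\]
so equality holds for every $l$.

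The main obstacle is essentially bookkeeping: I need to confirm the effective supports of the two sums agree, since the assertion is about sums, not individual terms. The convention that binomials with negative entries vanish forces both sums to be supported on $\max(0,k-j)\le l\le\min(k,i)$, so the termwise match upgrades to an equality of sums without further range adjustment, and no combinatorial input beyond Proposition 4.7 and Definition 4.6 is needed.
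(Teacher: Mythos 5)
Your proof is correct and takes essentially the same route as the paper's: the paper also proves R13 by direct substitution of the primed indices into the summation formula (Proposition 4.7, which is just the coefficient extraction from Definition 4.6), followed by a change of summation variable ($l'=i-l$ there, $l\mapsto k-l$ in your generating-function expansion, yielding the identical sum) and a termwise comparison of the binomial products.
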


\begin{proposition}[R23 Symmetry - Weyl Group]

\begin{equation}\begin{pmatrix}m+n-k \\ i,\ j,\ * \end{pmatrix}  c_{m, n, k}(m-i, n-j) = (-1)^k \begin{pmatrix}m+n-k \\ m-i,\ n-j,\ * \end{pmatrix} c_{m, n, k}(i, j)\end{equation}
\end{proposition}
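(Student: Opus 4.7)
The plan is to compute $c_{m,n,k}(i,j)$ in a second way and compare with the Proposition 4.7 formula for $c_{m,n,k}(m-i,n-j)$. Definition 4.2 describes $f^{p-k}\phi_{m,n,k}$ by descending from the highest weight vector via $f$; the alternative is to ascend from the explicit lowest weight vector of Proposition 4.9 via $e$. Since the weight subspace of weight $m+n-2p$ in $V(m+n-2k)$ is one-dimensional, (LA.2) applied inside $V(m+n-2k)$ yields
\[
f^{p-k}\phi_{m,n,k} \;=\; \frac{(p-k)!}{(m+n-2k)!\,(m+n-k-p)!}\,e^{m+n-k-p}\bigl(f^{m+n-2k}\phi_{m,n,k}\bigr).
\]

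Substitute the lowest weight vector from Proposition 4.9, expand $e^{m+n-k-p}$ via the Leibniz rule on $V(m)\otimes V(n)$, and simplify each tensor factor using the closed form $e^a f^c\phi_N = \frac{c!\,(N-c+a)!}{(c-a)!\,(N-c)!}\,f^{c-a}\phi_N$. For fixed $(i,j)$ with $i+j=p$, the Leibniz split is forced to $a = m-k+l-i$ and $b = n-l-j$, where $l$ is the summation index of Proposition 4.9. The essential simplification is that $(k-l+a)! = (m-i)!$ and $(l+b)! = (n-j)!$ are independent of $l$ and factor outside the sum. Applying the orientation-reversing substitution $l\mapsto k-l'$ introduces a $(-1)^k$ and, after identifying $m+n-k-p = m+n-i-j-k$ as the top of the upper binomial, matches the remaining sum to Proposition 4.7 for $c_{m,n,k}(m-i,n-j)$ exactly. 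The surviving prefactor collapses to
\[
c_{m,n,k}(i,j) \;=\; (-1)^k\,\frac{(i+j-k)!\,(m-i)!\,(n-j)!}{i!\,j!\,(m+n-i-j-k)!}\,c_{m,n,k}(m-i,n-j),
\]
which is R23 after recognizing the prefactor as the ratio $\binom{m+n-k}{i,j,*}/\binom{m+n-k}{m-i,n-j,*}$ of multinomial coefficients and cross-multiplying.

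The principal obstacle is the factorial bookkeeping through the Leibniz expansion combined with the closed-form $e^a f^c\phi_N$. The key observation that keeps the calculation tractable is that $k-l+a$ and $l+b$ collapse to the constants $m-i$ and $n-j$, so the problem reduces to matching a single alternating sum against Proposition 4.7; the sign $(-1)^k$ in R23 is traceable directly to the orientation-reversing substitution $l\mapsto k-l'$.
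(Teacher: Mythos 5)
Your argument is correct, and I checked the bookkeeping: the Leibniz split is indeed forced to $a=m-k+l-i$, $b=n-l-j$, the quantities $k-l+a=m-i$ and $l+b=n-j$ do factor out of the sum, and after $l\mapsto k-l'$ the surviving alternating sum is exactly the Proposition 4.7 expansion of $c_{m,n,k}(m-i,n-j)$ up to the constant $(m+n-i-j-k)!/\bigl((m-k)!\,(n-k)!\bigr)$, which is absorbed into the prefactor you state. However, this is a genuinely different route from the paper's. The paper proves R23 in one stroke by applying the Weyl group element $w=\begin{bmatrix}0 & -1\\ 1 & 0\end{bmatrix}$ to the defining relation (4.2): formula (8.5), $w(f^p\phi_m)=(-1)^p\frac{p!}{(m-p)!}f^{m-p}\phi_m$, holds simultaneously in each tensor factor and in the embedded copy of $V(m+n-2k)$, so comparing coefficients of $f^{m-i}\phi_m\otimes f^{n-j}\phi_n$ on both sides immediately yields the identity, with the sign $(-1)^k$ arising as $(-1)^{i+j}/(-1)^{i+j-k}$. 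What the group element encodes for free --- that reversing the weight string multiplies coordinates by predictable factorial ratios --- you re-derive by hand, ascending from the explicit lowest weight vector of Proposition 4.9 via $e^{m+n-k-p}$ and matching sums term by term. Your version is longer and heavier on factorial manipulation, but it is purely Lie-algebraic and combinatorial, needs no group element or the unproved-in-the-paper formula (8.5), and as a byproduct it exhibits a second independent derivation of the summation formula for the reflected coordinates; the paper's version is the slicker conceptual argument. Both are valid proofs of the proposition.
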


\begin{table}
\caption{Regge Symmetries}
\begin{tabular}{|c|c|c|c|c|c|}
\hline
  & m & n & i & j  & k \\
 \hline
C12 & $n$ & $m$ & $j$ & $i$ & $k$ \\
Transpose & $m$ & $m+n-i-k$ & $m-k$ & $j$ & $m-i$ \\
R13 & $n-k+i$ & $m-k+j$ & $i$ & $j$ & $i+j-k$ \\
R23 & $m$ & $n$ & $m-i$ & $n-j$ & $k$ \\
\hline
\end{tabular}
\end{table}

\begin{proof} The C12 symmetry follows directly from the definition of the coordinate vector for highest weights.  The R23 symmetry follows from the Weyl group action
\begin{equation}w(f^p\phi_m)=(-1)^p \frac{p!}{(m-p)!} f^{m-p}\phi_m\end{equation}
where $w$ is the group element 
\begin{equation}w=\begin{bmatrix}0 & -1 \\ 1 & \ \ 0 \end{bmatrix}.\end{equation}
The R13 and modified Transpose symmetries follow from direct substitution into Proposition 4.7. The modified Transpose symmetry follows immediately.  For the R13 symmetry, one also changes the summation variable to $l' = i-l.$
\end{proof}

The R23 Symmetry and Theorem 4.10 immediately yield

\begin{corollary} 
\begin{equation} C_{m, n, k}(i, j) = \frac{\begin{pmatrix}m+n-k \\ m-i,\ n-j,\ * \end{pmatrix}\  c_{m, n, k}(i, j)}{\begin{pmatrix}m+n-k \\ i,\ j,\ *\end{pmatrix}\ D(m, n, k)}\end{equation}
\end{corollary}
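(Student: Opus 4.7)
The plan is to observe that the R23 symmetry (Proposition 8.4) provides an identity relating $c_{m,n,k}(m-i, n-j)$ and $c_{m,n,k}(i,j)$, and that Theorem 4.10 already expresses $C_{m,n,k}(i,j)$ in terms of $c_{m,n,k}(m-i, n-j)$. So the strategy is simply to solve R23 for $c_{m,n,k}(m-i,n-j)$ and substitute into 4.10.

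In detail, I would first rewrite Proposition 8.4 in the form
\begin{equation*}
c_{m,n,k}(m-i, n-j) \;=\; \frac{(-1)^k\,\binom{m+n-k}{m-i,\ n-j,\ *}}{\binom{m+n-k}{i,\ j,\ *}}\; c_{m,n,k}(i,j),
\end{equation*}
which is legitimate since the multinomial coefficient in the denominator is positive whenever the indices satisfy the standing constraints $0\le i\le m$, $0\le j\le n$, $k\le i+j\le m+n-k$. Then, plugging this expression into
\begin{equation*}
C_{m,n,k}(i,j) \;=\; \frac{(-1)^k}{D(m,n,k)}\; c_{m,n,k}(m-i,n-j)
\end{equation*}
from Theorem 4.10 collapses the two factors of $(-1)^k$ to $1$, yielding exactly the claimed formula.

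There is really no obstacle here; the only thing to check is that the substitution is valid for every $(i,j)$ in the admissible range, i.e.\ that we are not dividing by a zero multinomial coefficient. Since $\binom{m+n-k}{i,j,*}=\frac{(m+n-k)!}{i!\,j!\,(m+n-k-i-j)!}$ and the constraint $k\le i+j\le m+n-k$ forces $m+n-k-i-j\ge 0$, this denominator is always strictly positive, so the derivation goes through without cases.
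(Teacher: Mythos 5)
Your proposal is correct and is exactly the paper's argument: the paper states that the R23 symmetry (Proposition 8.4) and Theorem 4.10 "immediately yield" the corollary, which is precisely the substitution you carry out. Your added check that the multinomial coefficient $\binom{m+n-k}{i,\ j,\ *}$ is nonzero under the standing index constraints is a reasonable extra verification but not a point the paper dwells on.
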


\section{Normalization}

Another feature of the theory concerns alternative expressions for Clebsch-Gordan coefficients, either in terms of binomial summations \cite{Sh}  or hypergeometric series \cite{Ra}.  For the most part, these expressions are unified through the Regge group, which in turn is essentially a mechanism from hypergeometric theory.  Due to their ubiquity in the literature, brief derivations of Wigner's and Racah's formulas are given here.

\begin{proof}{(Proposition 3.2)}
Extending coefficients from the $SL(2, \real)$ theory with real vector spaces, one may choose $\phi_m$, and in turn all $f^i\phi_m$, to be real in $V(m)$.  With this choice of basis for $V(m),$ one restricts the group action to 
\begin{equation*}SU(2)=\{g\in SL(2, \cpx)\ |\ g\cdot g^*=I_2\},\end{equation*}
where $g^*$ is the conjugate transpose of $g$ and $I_2$ is the identity matrix of size 2. Note that
\begin{equation}w = \begin{bmatrix} 0 &-1 \\ 1 &\ \ 0 \end{bmatrix}\end{equation}
is in $SU(2),$ and, for $g$ in $SU(2)$,
\begin{equation}wgw^{-1} = (g^{-1})^T = {\bar g}.\end{equation}
For $v_1, v_2$ in $V(m)$, define the invariant Hermitian inner product
\begin{equation}\langle\langle v_1, v_2\rangle\rangle_m = \langle v_1, \overline{w\cdot v_2}\rangle_m.\end{equation}
Then
\begin{align}  \langle\langle f^i\phi_m, f^i\phi_m\rangle\rangle_m &= \langle f^i\phi_m, \overline{w\cdot f^i\phi_m}\rangle_m\\
&=(-1)^i\ \frac{i!}{(m-i)!}\langle f^i\phi_m, \overline{f^{m-i}\phi_m}\rangle_m\notag\\
&=(-1)^i\ \frac{i!}{(m-i)!}\langle f^i\phi_m, f^{m-i}\phi_m\rangle_m = \frac{i!}{(m-i)!}\notag
\end{align}

Likewise, in $V(m)\otimes V(n)$,
\begin{equation}\langle\langle f^{i+j-k}\phi_{m,n,k}, f^{i+j-k}\phi_{m,n,k}\rangle\rangle =  \frac{(i+j-k)!\ D(m, n, k)}{(m+n-i-j-k)!}.\end{equation}

Normalizing basis vectors, one recovers Wigner's Formula as
\begin{equation}C'_{m,n,k}(i, j) = \frac{||f^{i+j-k}\phi_{m,n,k}||}{||f^i\phi_m||_m\ ||f^j\phi_n||_n} C_{m,n,k}(i, j).\end{equation}
\end{proof}

Next one has Racah's formula

\begin{proposition}[\cite{Vi}, Ch. III.8.9, Equation (10)]
 
 {\fontsize{10}{10}\selectfont
\begin{equation*}C'_{m,n,k}(i, j)\ =\sqrt{  \frac{(m+n-2k+1)\begin{pmatrix} m+n-k \\ m-k,\ n-k,\ k\end{pmatrix} }{{(m+n-k+1)\ \begin{pmatrix} m+n-k \\ m-i,\ n-j,\ *\end{pmatrix}  \begin{pmatrix} m+n-k \\ i,\ j,\ *\end{pmatrix}}}}\cdot S'
\end{equation*}
}
where 
$$S'=\sum\limits_{s=0}^k \ (-1)^{s} \begin{pmatrix} k \\ s\end{pmatrix}\begin{pmatrix} m-k \\ i-s\end{pmatrix}\begin{pmatrix} n-k \\ j-k+s\end{pmatrix}.$$
\end{proposition}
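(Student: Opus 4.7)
The plan is to derive Racah's formula from Wigner's formula (just established in the proof of Proposition 3.2) by applying the R13 Regge symmetry and then computing a Regge-transformed coefficient via a modified expansion of its generating function.

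First, the ratio of the Racah square root to the Wigner square root simplifies to $\binom{m+n-k}{m-k,n-k,k}\big/\binom{m+n-k}{m-i,n-j,*}$, so (since $c_{m,n,k}(i,j)=S$ by Proposition 4.7) proving Racah's formula is equivalent to the identity
\[
  \binom{m+n-k}{m-k,n-k,k}\,S' \;=\; \binom{m+n-k}{m-i,n-j,*}\,c_{m,n,k}(i,j).
\]
Applying R13 (Proposition 8.3) to the right side rewrites it as $(-1)^i\binom{m+n-k}{m-k,n-k,k}\,c_{m',n',k'}(i,j)$ with $m'=n-k+i$, $n'=m-k+j$, $k'=i+j-k$, so after cancellation the task reduces to showing $S' = (-1)^i c_{m',n',k'}(i,j)$.

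Second, by Definition 4.6, $c_{m',n',k'}(i,j)$ is the coefficient of $x^j y^i$ in
\[
  \frac{(x+y)^k}{(1-x)^{n-j+1}(1+y)^{m-i+1}},
\]
since $i+j-k'=k$, $m'-k'=n-j$, and $n'-k'=m-i$. Rather than expanding $(x+y)^k$ directly (which would reproduce a Wigner-type sum after relabeling), I would instead use the substitution
\[
  (x+y)^k = \bigl((1+y)-(1-x)\bigr)^k = \sum_{s=0}^k (-1)^s \binom{k}{s}(1-x)^s(1+y)^{k-s}.
\]
Each $(1-x)^s$ and $(1+y)^{k-s}$ cancels into its denominator, leaving pure positive-power binomial series; extracting $[x^j y^i]$ via $(1\mp z)^{-r} = \sum_\ell \binom{r-1+\ell}{\ell}(\pm z)^\ell$ produces
\[
  c_{m',n',k'}(i,j) = (-1)^i \sum_{s=0}^k (-1)^s \binom{k}{s}\binom{n-s}{j}\binom{m-k+s}{i}.
\]

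Finally, I would identify this sum with Racah's $S'$ via a bivariate generating-function argument: the displayed sum equals the coefficient of $u^i v^j$ in
\[
  \sum_s(-1)^s\binom{k}{s}(1+u)^{m-k+s}(1+v)^{n-s} = (1+u)^{m-k}(1+v)^{n-k}\bigl((1+v)-(1+u)\bigr)^k = (1+u)^{m-k}(1+v)^{n-k}(v-u)^k,
\]
and expanding $(v-u)^k = \sum_s(-1)^s\binom{k}{s}u^s v^{k-s}$ in the last expression yields $\sum_s(-1)^s\binom{k}{s}\binom{m-k}{i-s}\binom{n-k}{j-k+s} = S'$. Combined with the reduction of the first paragraph, this proves the proposition. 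The one nontrivial step is this final generating-function identification, which implicitly establishes the bivariate binomial identity
\[
  \sum_s(-1)^s\binom{k}{s}\binom{n-s}{j}\binom{m-k+s}{i} = \sum_s(-1)^s\binom{k}{s}\binom{m-k}{i-s}\binom{n-k}{j-k+s};
\]
every other step is routine bookkeeping of binomial series and factorials.
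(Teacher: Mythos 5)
Your argument is correct, but it follows a genuinely different path from the paper's. Both proofs begin with the same observation: dividing Racah's square root by Wigner's reduces the proposition to the identity $\binom{m+n-k}{m-k,\ n-k,\ k}S'=\binom{m+n-k}{m-i,\ n-j,\ *}\,c_{m,n,k}(i,j)$. From there the paper proves a standalone closed form (Proposition 9.2),
\begin{equation*}
c_{m,n,k}(i,j)=\sum_{l=0}^{k}(-1)^{l}\binom{i+j-k}{i-l}\binom{m-i}{k-l}\binom{n-j}{l},
\end{equation*}
by induction on $i+j-k$ using Pascal's identity (7.6) and the outer recurrence (7.4); Racah's formula then drops out by rearranging factorials between the multinomial ratio and this sum. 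You instead invoke the R13 Regge symmetry (Proposition 8.3) to trade the multinomial ratio for a Regge-transformed coordinate, reducing the task to $S'=(-1)^i c_{n-k+i,\,m-k+j,\,i+j-k}(i,j)$, and then evaluate that coordinate from Definition 4.6 by the rewriting $(x+y)^k=\bigl((1+y)-(1-x)\bigr)^k$, finishing with a clean bivariate generating-function identification of $S'$. I checked the index bookkeeping ($i+j-k'=k$, $m'-k'=n-j$, $n'-k'=m-i$), the sign $(-1)^i$, and the final identity; all are sound, with the minor caveat that you are using Proposition 8.3 and Definition 4.6 as formal binomial/generating-function identities for transformed parameters that need not satisfy the standing convention $m'\le n'$ --- harmless here, since the paper itself derives R13 by direct substitution into Proposition 4.7. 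The trade-off: the paper's route yields Proposition 9.2 as an independently useful second summation formula for the integer coordinates, while yours avoids induction entirely, reuses the Section 8 machinery, and makes visible why $S'$ is just a Regge transform of the original generating function.
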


Comparing with Wigner's Formula, Proposition 9.1 follows immediately from

 \begin{proposition}
\begin{equation*}c_{m,n,k}(i,j)=\sum\limits_{l=0}^k \ (-1)^{l} \begin{pmatrix} i+j-k \\ i-l\end{pmatrix}\begin{pmatrix} m-i \\ k-l\end{pmatrix}\begin{pmatrix} n-j \\ l\end{pmatrix}.\end{equation*}
\end{proposition}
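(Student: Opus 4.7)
The plan is to establish an alternative generating-function representation for $c_{m,n,k}(i,j)$, namely
\begin{equation*}
c_{m,n,k}(i,j) \;=\; [v^j w^i]\,(1+v)^{m-i}(1-w)^{n-j}(v+w)^{i+j-k},
\end{equation*}
from which the claimed identity will follow at once by expanding $(v+w)^{i+j-k}$ via the binomial theorem and reading off the coefficient: after matching powers of $v$ and $w$, the reindexing $l = k-j+a$ together with $\binom{i+j-k}{l+j-k}=\binom{i+j-k}{i-l}$ converts the resulting sum to the one stated.

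To prove this generating-function identity, I would start from the residue form of Definition 4.6,
\begin{equation*}
c_{m,n,k}(i,j) = \operatorname{Res}_{x=y=0}\frac{(x+y)^{i+j-k}}{(1-x)^{m-k+1}(1+y)^{n-k+1}}\cdot\frac{dx\,dy}{x^{j+1}y^{i+1}},
\end{equation*}
and apply the formal change of variables $x = v/(1+v)$, $y = w/(1-w)$. Direct calculation gives $1-x = 1/(1+v)$, $1+y=1/(1-w)$, and $x+y = (v+w)/((1+v)(1-w))$, while $dx/x^{j+1} = (1+v)^{j-1}\,dv/v^{j+1}$ and $dy/y^{i+1} = (1-w)^{i-1}\,dw/w^{i+1}$. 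Collecting exponents, those of $(1+v)$ sum to $(m-k+1)-(i+j-k)+(j-1) = m-i$, and those of $(1-w)$ sum to $(n-k+1)-(i+j-k)+(i-1) = n-j$, producing the alternate form above.

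The main technical obstacle is precisely this exponent bookkeeping; one must carefully track all five contributions (from $(1-x)^{-(m-k+1)}$, $(1+y)^{-(n-k+1)}$, $(x+y)^{i+j-k}$, the Jacobian factors, and the inverse monomials $x^{-(j+1)}$, $y^{-(i+1)}$) and verify the clean collapse. If the change-of-variables argument is viewed as uncomfortable, an equivalent elementary alternative is to verify that the proposed sum $N(i,j)$ satisfies the same initial condition and the same Pascal recurrence as $c_{m,n,k}(i,j)$: on the diagonal $i+j=k$ one has $\binom{i+j-k}{i-l}=\delta_{l,i}$, leaving $(-1)^i\binom{m-i}{k-i}\binom{n-k+i}{i}$, which matches Proposition 4.1; and for Proposition 7.1, expanding $\binom{n-j+1}{l}$ and $\binom{m-i+1}{k-l}$ via Pascal's identity in $N(i,j-1)+N(i-1,j)$ yields two ``main'' terms that recombine via $\binom{i+j-k-1}{i-l}+\binom{i+j-k-1}{i-l-1}=\binom{i+j-k}{i-l}$ to give $N(i,j)$, together with two ``extra'' terms that cancel each other after the index shift $l\mapsto l+1$.
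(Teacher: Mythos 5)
Your proposal is correct, but it reaches the identity by a genuinely different route than the paper. The paper proves Proposition 9.2 by induction on $i'=i+j-k$: it applies Pascal's identity (7.6) to the factor $\binom{i+j-k}{i-l}$, recognizes the two resulting sums as $c_{m-1,n,k}(i-1,j)$ and $c_{m,n-1,k}(i,j-1)$ via the induction hypothesis, and closes with the outer recurrence (7.4); the base case $i+j=k$ is matched against Proposition 4.1 exactly as you do. Your primary argument instead performs the substitution $x=v/(1+v)$, $y=w/(1-w)$ in the generating function of Definition 4.6 and lands on the closed form $[v^jw^i]\,(1+v)^{m-i}(1-w)^{n-j}(v+w)^{i+j-k}$, from which Racah's sum falls out by a single binomial expansion; I checked the exponent bookkeeping ($m-k+1-(i+j-k)+(j-1)=m-i$, etc.) and the reindexing $l=k-j+a$, and both are right. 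This buys something the paper's proof does not: it exhibits Wigner's and Racah's forms as the \emph{same} generating function in two coordinate systems, which is exactly the mechanism behind the hypergeometric transformations and Regge symmetries the paper invokes elsewhere, and it makes the $(m-i)$, $(n-j)$ exponents conceptually inevitable rather than verified after the fact. The only point to make airtight is the legitimacy of coefficient extraction under the formal substitution; since $x=v/(1+v)$ is a formal power series with nonzero linear term, the standard invariance of $\operatorname{Res}$ under such substitutions (or a small-contour argument for these rational integrands) covers it. Your fallback argument --- verifying that the candidate sum satisfies the same initial data on $i+j=k$ and the same Pascal recurrence (Proposition 7.1), with the two cross terms cancelling after the shift $l\mapsto l+1$ --- is also correct and is closer in spirit to the paper's induction, differing mainly in that you verify the recurrence internally for the candidate formula rather than recognizing shifted instances of it and appealing to (7.4).
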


\begin{proof}[Proof by induction] First note that, for any $m$ and $n$ with $k=0,$ only one term contributes in either sum; that is,
\begin{equation*}c_{m,n,0}(i, j) = \begin{pmatrix} i+j \\ i\end{pmatrix}.\end{equation*}

The induction is indexed by $i'=i+j-k$ (all $m, n$).  For the base case, when $i'=0,$ the first binomial coefficient only contributes when $l=i$, and the formula  agrees with Proposition 4.1.

For the induction step, suppose the proposition holds for $i'=i+j-k-1.$ Using (7.6), one has
\begin{multline*}
\sum\limits_{l=0}^k \ (-1)^{l} \begin{pmatrix} i+j-k \\ i-l\end{pmatrix}\begin{pmatrix} m-i \\ k-l\end{pmatrix}\begin{pmatrix} n-j \\ l\end{pmatrix}\\
=\sum\limits_{l=0}^k \ (-1)^{l} \bigg[\begin{pmatrix} i+j-k-1 \\ i-l-1\end{pmatrix}+\begin{pmatrix} i+j-k-1 \\ i-l\end{pmatrix}\bigg]\begin{pmatrix} m-i \\ k-l\end{pmatrix}\begin{pmatrix} n-j \\ l\end{pmatrix}\\
= c_{m-1,n,k}(i-1, j) + c_{m,n-1,k}(i, j-1) = c_{m,n,k}(i, j).\\
\end{multline*}
The last equality holds by the recurrence relation (7.4).
\end{proof}

\section{Appendix: Projection Operator Methods}

The formulas in Section 4 were discovered after explicitly computing projection operators for the types $V(m')$ in $V(m)\otimes V(n)$.  These methods depend on the multiplicity-free properties of both the weight vectors in $V(m')$ and the Clebsch-Gordan decomposition.  Extremal projector methods for $SU(2)$ first appear in  \cite{Lo};  see \cite{To} for background on extremal projectors. An algorithm for constructing these projection operators follows.

Define the Casimir element of $U(sl(2))$ by
\begin{equation}8\Omega \ = \ 2ef + 2fe + h^2\ = \ 4ef + h^2 - 2h.\end{equation}
This element lies in the center of $U(sl(2))$. In fact,  $Z(U(sl(2)) = \mathbb{C}[\Omega].$
One notes two useful facts about $\Omega$:
\begin{enumerate}
\item On $V(n)$, $8\Omega$ acts by the scalar $\lambda_n=n(n+2)$
\item  $\Omega$ preserves weight spaces:   if $hv=cv$ then $h(\Omega v) = c (\Omega v)$
\end{enumerate}
For computational efficiency, one may use $ef$ instead of $\Omega$, as $ef$ also preserves weight spaces.

Recall spectral theory for finite-dimensional vector spaces.  Let $M$ be an $r\times r$ matrix,  diagonalizable with distinct eigenvalues $\lambda_i,\ 1\le i \le r.$  If one computes the  projection operator for the eigenspace $W(\lambda_j)$ of $M$
then $W(\lambda_j)$ is spanned by any nonzero column of the projection operator.
Fix $1\le j \le r.$   For $i\ne j$, consider the quotients
\begin{equation}M_i(\lambda_j) = \frac{M-\lambda_i I}{\lambda_j - \lambda_i}\end{equation}
Then
\begin{equation}M_i(\lambda_j) = \begin{cases} 1 & \text{on \ } W(\lambda_j)\\ 0 & \text{on\ } W(\lambda_i)\\  * & \text{on other \ }W(\lambda_k)  \end{cases}\end{equation}
Now define
\begin{equation}P(\lambda_j) = \prod_{i\ne j} M_i(\lambda_j)\ : \ \cpx^r \rightarrow W(\lambda_j) \subset \cpx^r.\end{equation}

Fix $V(m)$ and $V(n),$ and  define 
\begin{equation}M=[ef]_{\bf B_p},\end{equation}
where ${\bf B_p}$ is the basis for the weight space for $m+n-2p$ in $V(m)\otimes V(n).$  As noted, $M$ is diagonalizable with eigenvalues of multiplicity one.  In fact, $ef$ acts on the weight vector in $V(m+n-2k)$ with eigenvalue
\begin{equation}\lambda_{m, n, k, p} = (m+n-p-k)(p-k+1).\end{equation}

Next, using formulas from Section 2,  the effect of $ef$ on the elements of ${\bf B}_p$ is given by 
\begin{proposition}[Tridiagonal Formula]
\begin{align} (ef)(f^{i}\phi_m\otimes f^{j}\phi_n)&= (ef^{i+1}\phi_m)\otimes f^{j}\phi_n\ \ + \ \ f^{i}\phi_m\otimes (ef^{j+1}\phi_n)\\
								&\quad + (ef^{i}\phi_m)\otimes f^{j+1}\phi_n\ \ +\  \  f^{i+1}\phi_m\otimes (ef^{j}\phi_n)\notag\end{align}
\begin{align*} \quad				&= [(i+1)(m -i)\ + \ (j+1)(n-j)]\, f^{i}\phi_m\otimes f^{j}\phi_n\\
								&\qquad\qquad\qquad\qquad\  + \ i(m-i+1)\ f^{i-1}\phi_m\otimes f^{j+1}\phi_n\\ &\qquad\qquad\qquad\qquad +\ \ j(n-j+1)\, f^{i+1}\phi_m\otimes f^{j-1}\phi_n\end{align*}
\end{proposition}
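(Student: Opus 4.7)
The plan is to verify both displayed identities by unfolding the action of $ef\in U(sl(2))$ on a pure tensor using only the Leibniz rule for the coproduct together with (LA.2) and (LA.3) from Section 2. The first identity is purely the statement of how a product of two derivations acts on a tensor product; the second is then an arithmetic collection of terms.

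For the first equality, I first apply $f$ to $f^i\phi_m\otimes f^j\phi_n$ using (LA.3) and the tensor product rule $f\cdot(v\otimes w)=fv\otimes w+v\otimes fw$, obtaining
\begin{equation*}
f\cdot(f^i\phi_m\otimes f^j\phi_n)=f^{i+1}\phi_m\otimes f^j\phi_n+f^i\phi_m\otimes f^{j+1}\phi_n.
\end{equation*}
I then apply $e$ to each summand in the same Leibniz fashion, which yields the four-term right-hand side after regrouping.

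For the second equality, I substitute (LA.2) into each of the four $e$-evaluations, namely
\begin{align*}
ef^{i+1}\phi_m&=(i+1)(m-i)\,f^i\phi_m, & ef^{j+1}\phi_n&=(j+1)(n-j)\,f^j\phi_n,\\
ef^i\phi_m&=i(m-i+1)\,f^{i-1}\phi_m, & ef^j\phi_n&=j(n-j+1)\,f^{j-1}\phi_n.
\end{align*}
After substitution the four summands populate only three tensors: the first two contribute to $f^i\phi_m\otimes f^j\phi_n$, the third to $f^{i-1}\phi_m\otimes f^{j+1}\phi_n$, and the fourth to $f^{i+1}\phi_m\otimes f^{j-1}\phi_n$. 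Collecting coefficients gives exactly the claimed tridiagonal expansion.

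There is no substantive obstacle; the argument is mechanical. The only care required is to track the index shifts when invoking (LA.2) and to observe that the resulting operator is tridiagonal on the weight-space basis ${\bf B}_{i+j}$ because $ef$ preserves weight spaces (each $e$ decreases the $f$-exponent by $1$ and each accompanying $f$ increases it by $1$, producing net shifts in $\{-1,0,+1\}$ in the first tensor factor).
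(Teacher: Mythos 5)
Your proof is correct and matches the paper's intent exactly: the paper states this proposition with no separate proof beyond the remark that it follows ``using formulas from Section 2,'' and your two applications of the Leibniz rule followed by substitution of (LA.2) and (LA.3) is precisely that computation. The index shifts and the collection into the three tensors $f^i\phi_m\otimes f^j\phi_n$, $f^{i-1}\phi_m\otimes f^{j+1}\phi_n$, $f^{i+1}\phi_m\otimes f^{j-1}\phi_n$ all check out.
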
			
								
\begin{example}  Returning to Example 5.1, consider the subspace for weight 1 in $V(3)\otimes V(4)$, with basis
\begin{equation}B_3= \{\phi_3\otimes f^3\phi_4,\ f\phi_3\otimes f^2\phi_4,\ f^2\phi_3\otimes f\phi_4,\,\ f^3\phi_3\otimes \phi_4\},\end{equation}
Using the Tridiagonal Formula, $ef$ acts on the this subspace in coordinates by 
\begin{equation}M=[ef]_{B_3} = \begin{bmatrix} 7 & 3 & 0 & 0\\ 6 & 10 & 4 & 0\\ 0 & 6 & 9 & 3\\ 0 & 0 & 4 & 4\end{bmatrix}\end{equation}
On each $V(7-2k)$ with $k \in \{3, 2, 1, 0\},$ $M$ has respective eigenvalues $\{1, 4, 9, 16\}$.
Using the projection operator formula, the corresponding projections are given by
\begin{equation*}P_{V(1)}=\frac{1}{10}\begin{bmatrix}\ \ 1 & -1 & \ \ 1 & -1\\ -2 &\ \  2 & -2 &\ \ 2\\ \ \ 3 & -3 & \ \ 3 & -3\\ -4 & \ \ 4 & -4 & \ \ 4\end{bmatrix},\ P_{V(3)}=\frac{1}{5}\begin{bmatrix} \ \ 2 & -1 &\ \ \  0 & \ \ 1\\ -2 & \ \ 1 & \ \ \ 0 & -1\\ \ \   0 &\ \   0 &\ \ \  0 &\ \   0\\ \ \ 4 & -2 & \ \ \ 0 & \ \ 2\end{bmatrix},\end{equation*} \begin{equation*}P_{V(5)}=\frac{1}{70}\begin{bmatrix} \ \ 27 & \ \ \ 9 & -15 & -9\\ \ \ 18 & \ \ \ 6 & -10 & -6\\ -45 & -15 & \ \ \ 25 & \  15\\ -36 & -12 & \ \ \ 20 & \  12\end{bmatrix},\  P_{V(7)}=\frac{1}{35}\begin{bmatrix}\ 4 &\  6 & \ 4 & \ 1\\  12 & 18 & 12 &\  3\\ 12 & 18 & 12 & \ 3\\ \ 4 & \ 6 & \ 4 &\  1\end{bmatrix}.\end{equation*}

Consider the basis vector $f^2\phi_3\otimes f\phi_4$  from ${\bf B}_3$ in $V(3)\otimes V(4)$.
In coordinates,  $[f^2\phi_3\otimes f\phi_4]_{\bf B_3} = (0, 0, 1, 0).$  The decomposition according to type follows from the third columns in the projection operators:
\begin{align}  (0,0, 1, 0)  &= (1/10, -1/5 , 3/10 , -2/5)  \tag{V(1)}\\
&+ (0,\quad \quad 0,\quad \quad 0,\quad \quad 0)\tag{V(3)}\\
&+ (-3/14, -1/7, 5/14, 2/7)\tag{V(5)}\\
&+ (4/35, 12/35, 12/35, 4/35)\tag{V(7)}
\end{align}

Finally the coordinate vectors and Clebsch-Gordan coefficients in Section 5 factor the projection operators.  For instance,

\begin{equation*}P_{V(5)} = \begin{bmatrix} 3 & 0 &\ \ 0 &\ \  0\\  0 &  2 &  \ \ 0 & \ \  0\\ 0 & 0 & -5 & \ \ 0\\ 0 &   0 &\ \  0 & -4\end{bmatrix}\begin{bmatrix}  1 & 1 & 1 & 1\\  1 & 1 &  1 & 1\\ 1 & 1 & 1 & 1\\ 1 & 1 & 1 & 1\end{bmatrix}\begin{bmatrix} 9/70 & 0 & 0 & 0\\ 0 & 3/70 & 0 & 0\\ 0 & 0 & -5/70 & 0\\ 0 & 0 & 0 & -3/70\end{bmatrix}\end{equation*}
\end{example}

\bibliographystyle{amsplain}

\end{document}